
\documentclass{IEEEtran}
\usepackage{amsmath,amssymb,amsfonts,amsthm}
\usepackage{graphicx,color}
\usepackage{hyperref}
\usepackage{algorithm}
\usepackage{varioref}
\usepackage{color}
\graphicspath{{./Figures/}}



\newcommand{\fromto}[2]{\{#1,\dots, #2\}}

\newcommand{\subscr}[2]{#1_{\textup{#2}}}
\newcommand{\supscr}[2]{#1^{\textup{#2}}}
\newcommand{\setdef}[2]{\{#1 \, | \; #2\}}
\newcommand{\map}[3]{#1: #2 \rightarrow #3}



\newcommand{\real}{\mathbb{R}}

\renewcommand{\natural}{\mathbb{N}}

\newcommand{\R}{\mathbb{R}}
\newcommand{\N}{\mathbb{N}}

\newcommand{\argmax}{\operatornamewithlimits{argmax}}
\newcommand{\dst}{\operatornamewithlimits{dst}}
\newcommand{\diam}{\operatornamewithlimits{diam}}

\newcommand{\dmax}{\subscr{d}{max}}
\newcommand{\card}[1]{{|#1|}}

{\theoremstyle{definition}
\newtheorem{example}{Example}

}%
\newtheorem{thm}{Theorem}

\newtheorem{lem}[thm]{Lemma}

\newtheorem{prop}[thm]{Proposition}


\newcommand{\1}{\mathbf{1}}


\newcommand{\be}{\begin{equation}}
\newcommand{\ee}{\end{equation}}

\title{Message Passing Optimization of  Harmonic Influence Centrality}
\author{
Luca Vassio,\thanks{L.~Vassio is with Dipartimento di Ingegneria Meccanica e Aerospaziale, Politecnico di Torino, Corso Duca degli Abruzzi 24, 10128 Torino, Italy. Tel: +39-339-2868999.  \texttt{luca.vassio@polito.it}.}
\and
Fabio Fagnani,\thanks{
F.~Fagnani is with Dipartimento di Scienze Matematiche, Politecnico di Torino, Corso Duca degli Abruzzi 24, 10128 Torino, Italy. Tel: +39-011-0907509.  \texttt{fabio.fagnani@polito.it}.}
\and Paolo Frasca and\thanks{
P.~Frasca is with Department of Applied Mathematics, University of Twente, Drienerlolaan 5, 7522 NB Enschede, the Netherlands. Tel: +31-53-4893406. \texttt{p.frasca@utwente.nl}.}
\and Asuman Ozdaglar\thanks{
A.~Ozaglar is with LIDS, Massachusetts Institute of Technology, 77 Massachusetts Avenue, Cambridge, 02139 Massachusetts. Tel: +1-617-3240058.  \texttt{asuman@mit.edu}.}
}%

\date{Working paper, \today}

\begin{document}
\maketitle

\begin{abstract}
This paper proposes a new measure of node centrality in social networks, the {\em Harmonic Influence Centrality}, which emerges naturally in the study of social influence over networks. Using an intuitive analogy between social and electrical networks, we introduce a distributed {\em message passing} algorithm to compute the Harmonic Influence Centrality of each node. Although its design is based on theoretical results which assume the network to have no cycle, the algorithm can also be successfully applied on general graphs.
\end{abstract}



\section{Introduction}\label{sect:intro}

A key issue in the study of networks is the identification of their most important nodes: the definition of prominence is based on a suitable function of the nodes, called {\em centrality measure}.
The appropriate notion of centrality measure of a node depends on the nature of interactions among the agents situated in the network and the potential decision and control objectives.
In this paper, we define a new measure of centrality,  which we call {\em Harmonic Influence Centrality} (HIC) and which emerges naturally in the context of social influence. We explain why in addition to being descriptively useful, this measure answers questions related to the optimal placement of different agents or opinions in a network with the purpose of swaying average opinion. In large networks approximating real world social networks, computation of centrality measures of all nodes can be a challenging task. In this paper, we present a fully decentralized algorithm based on message passing for computing HIC of all nodes, which converges to the correct values for trees (connected networks with no cycles), but also can be applied to general networks.

Our model of social influence builds on recent work \cite{AC12}, which characterizes opinion dynamics in a network consisting of stubborn agents who hold a fixed opinion equal to zero or one ({\it i.e.}, type zero and type one stubborn agents) and regular agents who hold an opinion $x_i\in [0,1]$ and update it as a weighted average of their opinion and those of their neighbors.
We consider a special case of this model where a fixed subset of the agents are type zero stubborn agents and the rest are regular agents.
We define the HIC of node $\ell$ as the asymptotic value of the average opinion when node $\ell$ switches from a regular agent to a type one stubborn agent. This measure hence captures the
long run influence of node $\ell$  on the average opinion of the social network.

The HIC measure, beside its natural descriptive value, is also the precise answer to the following network decision problem: suppose you would like to have the largest influence on long run average opinion in the network and you have the capacity to change one agent from regular to type one stubborn. Which agent should be picked for this conversion? This question has a precise answer in terms of HIC; the agent with the highest HIC should be picked.

Though the HIC measure is intuitive, its centralized computation in a large network would be challenging in terms of its informational requirements that involve the network topology and the location of type zero stubborn agents. We propose here a decentralized algorithm whereby each agent computes its own HIC based on local information.
The construction of our algorithm uses a novel analogy between social and electrical networks by relating
 the Laplace equation resulting from social influence dynamics to the governing equations of electrical networks. Under this analogy, the asymptotic opinion of regular agent $i$ can be interpreted as the voltage of node $i$ when type zero stubborn agents are kept at voltage zero and type one agents are kept at voltage one. This interpretation allows us to use tricks of electrical circuits and provide a recursive characterization of HIC in trees. Using this characterization, we develop a so called {\em message passing} algorithm for its solution, which converges after at most a number of steps equal to the diameter of the tree.
The algorithm we propose runs in a distributed and parallel way among the nodes, which do not need to know the topology of the whole network, and has a lower cost in terms of number of operations with respect to the centralized algorithm recently proposed in \cite{YA11}.
Although originally designed for trees, our algorithm can be employed in general networks. For regular networks with unitary resistances (corresponding to all agents placing equal weights on opinions of other agents), we show this algorithm converges (but not necessarily to the correct HIC values). Moreover, we show through simulations that the algorithm performs well also on general networks.

\subsection{Related works}
In social science and network science there is a large literature on defining and computing centrality measures \cite{NEF:91}.
Among the most popular definitions, we mention degree centrality, node betweenness, information centrality~\cite{KS-MZ:89}, and Bonacich centrality~\cite{B87}, which is related to the well-studied Google PageRank algorithm. These notions have proven useful in a range of applications but are not universally the appropriate concepts by any means. Our interest in opinion dynamics thus motivates the choice to define a centrality measure for our purposes.
The problem of computing the HIC has previously been solved, via a centralized solution, in~\cite{YA11} for a special case of the opinion dynamics model considered here.

As opposed to centralized algorithms, the interest for distributed algorithms to compute centrality measures has risen more recently. In~\cite{HI-RT:10} a randomized algorithm is used to compute PageRank centrality. In~\cite{WW-CYT:13} distributed algorithms are designed to compute node (and edge) betweenness on trees, but are not suitable for general graphs.

Message passing algorithms have been widely studied and are commonly used in artificial intelligence and information theory and have demonstrated empirical success in numerous applications including low-density parity-check codes, turbo codes, free energy approximation, and satisfiability problems (see {\it e.g.} \cite{P88} \cite{BM05} \cite{W06} \cite{MM-AM:09}). In such algorithms, nodes are thought as objects with computational ability which can send and receive information to their neighbors. Interesting connections between message passing algorithms and electrical networks are discussed in~\cite{POV-HAL:03}. In fact, electrical networks have also been put in connection with social networks, especially because of the notion of resistance distance~\cite{EE-NH:10}.

\subsection{Paper outline}
In Section~\ref{sect:opinion-dynamics}
we review our model of opinion dynamics with stubborn agents and we define our problem of interest, that is the optimal placement of a stubborn agent.
In Section~\ref{sect:electrical} we review basic notions of electrical circuits and explain theirs connection with social networks.
Section~\ref{sect:electrical-trees} is devoted to apply this electrical analogy on tree graphs and to study the optimal placement problem in this important case. Next,
in Section~\ref{sect:trees-mpa} we describe the message passing algorithm to compute the optimal solution on trees, while in Section~\ref{sect:general-mpa} we consider its extension to general graphs: we provide both theoretical results, in Section~\ref{sect:mpa-regular}, and numerical simulations, in Section~\ref{sect:simulations}. Final remarks are given in Section~\ref{sect:conclusions}.

\subsection{Notation}

To avoid possible ambiguities, we here briefly recall some notation and a few basic notions of graph theory which are used throughout this paper. The cardinality of a (finite) set $E$ is denoted by $\card{E}$ and when $E\subset F$ we define its complement as $E^c=\setdef{f\in F}{f\notin E}.$ A square matrix $P$ is said to be nonnegative when its entries $P_{ij}$ are nonnegative, substochastic when it is nonnegative and $\sum_j P_{ij}\le1$ for every row $i$, and stochastic when it is nonnegative and $\sum_j P_{ij}=1$ for every $i$. We denote by $\1$ a vector of whose entries are all 1.
An (undirected) graph $G$ is a pair $(I,E) $ where $I$ is a finite set, whose elements are said to be the \emph{nodes} of $G$ and $E$ is a set of unordered pairs  of nodes called \emph{edges}.
The neighbor set of a node $i\in I$ is defined as $N_i:=\{j \in I |\{i,j\}\in E\}$. The cardinality of the neighbor set $d_i:=|N_i|$ is said to be the {\em degree} of node $i$. A graph in which all nodes have degree $d$ is said to be {\em $d$-regular}. A path in $G$ is a sequence of nodes $\gamma =(j_1,\dots j_s)$ such that $\{j_t,j_{t+1}\}\in E$ for every $t=1,\dots ,s-1$. The path $\gamma$ is said to connect $j_1$ and $j_s$. The path $\gamma$ is said to be simple if $j_h\neq j_k$ for $h\neq k$. A graph is connected if any pair of distinct nodes can be connected by a path (which can be chosen to be simple).  The length of the shortest path between two nodes $i$ and $j$ is said to be the {\em distance} between them, and is denoted as $\dst(i,j)$. Consequently, the {\em diameter} of a connected graph is defined to be
$\diam(G):=\max_{i,j \in I} \{\dst (i,j)\}.$ A tree is a connected graph such that for any pair of distinct nodes there is just one simple path connecting them.
Finally, given a graph $G=(I,E)$ and a subset $J\subseteq I$, the subgraph induced by $J$ is defined as $G_{|J}=(J, E_{|J})$ where $E_{|J}=\{\{i,j\}\in E\,|\, i,j\in J\}$.

\section{Opinion dynamics and stubborn agent placement}\label{sect:opinion-dynamics}
Consider a connected graph $G=(I,E)$. Nodes in $I$ will be thought as agents who can exchange information through the available edges $\{i,j\}\in E$. Each agent $i\in I$ has an opinion $x_i(t)\in\R$ possibly changing in time $t\in\N$.
We assume a splitting
$I=S\cup R$ with the understanding that agents in $S$ are {\it stubborn} agents not changing their opinions while those in $R$ are {\em regular} agents whose opinions modify in time according to the consensus dynamics
$$x_i(t+1)=\sum\limits_{j\in I}Q_{ij}x_j(t)\,,\quad \forall i\in R$$
where $Q_{ij}\geq 0$ for all $i\in R$ and for all $j\in I$ and $\sum_jQ_{ij}$=1 for all $i\in R$. The scalar $Q_{ij}$ represents the relative weight that agent $i$ places on agent $j$'s opinion. We will assume  that $Q$ only uses the available edges in $G$, more precisely, our standing assumption will be that
\begin{equation}\label{adapted} Q_{ij}=0\;\Leftrightarrow \{i,j\}\not\in E\end{equation}
A basic example is obtained by choosing for each regular agent uniform weights along the edges incident to it, {\it i.e.}, $Q_{ij}=d_i^{-1}$ for all $i\in R$ and $\{i,j\}\in E$.
Assembling opinions of regular and stubborn agents in vectors, denoted by $x^R(t)$ and $x^S(t)$, we can rewrite the dynamics in a more compact form as
\begin{equation*} \begin{array}{rcl}x^R(t+1)&=&Q^{11}x^R(t)+Q^{12}x^S(t)\\ x^S(t+1)&=&x^S(t)\end{array}\end{equation*}
where the matrices $Q^{11}$ and $Q^{12}$ are nonnegative matrices of appropriate dimensions.

Using the adaptivity assumption (\ref{adapted}), it is standard to show that $Q^{11}$ is a substochastic asymptotically stable matrix (e.g. spectral radius $<1$). Henceforth, $x^R(t)\to x^R(\infty)$ for $t\to +\infty$ with the limit opinions satisfying the relation
\begin{equation}\label{fixed point}x^R(\infty)=Q^{11}x^R(\infty)+Q^{12}x^S(0)\end{equation}
which is equivalent to
\begin{equation}\label{fixed point2}x^R(\infty) =(I-Q^{11})^{-1}Q^{12}x^S(0)\end{equation}
Notice that $[(I-Q^{11})^{-1}Q^{12}]_{hk}=[\sum_n(Q^{11})^nQ^{12}]_{hk}$ is always non negative and  is nonzero if and only if there exists a path in $G$ connecting the regular agent $h$ to the stubborn agent $k$ and not touching other stubborn agents. Moreover, the fact that $P$ is stochastic easily implies that $\sum_k[(I-Q^{11})^{-1}Q^{12}]_{hk}=1$ for all $h\in R$: asymptotic opinions of regular agents are thus convex combinations of the opinions of stubborn agents.

In this paper we will focus on the situation when $S=S^{0}\cup\{\ell\}$ and $R=I\setminus S$ assuming that $x_i(0)=0$ for all $i\in S^0$ while $x_\ell(0)=1$, {\it i.e.}, there are two types of stubborn agents: one type consisting of those in set $S^0$ that have opinion 0 and the other type consisting of the single agent $\ell$ that has opinion 1. We investigate how to choose $\ell$ in $I\setminus S^0$ in such a way to maximize the influence of opinion $1$ on the limit opinions. More precisely, let us denote as $x_i^{R,\ell}(\infty)$ the asymptotic opinion of the regular agent $i\in R$ under the above stubborn configuration, and define the objective function
\begin{equation}\label{H_definition}H(\ell):=\sum\limits_{i\in R}x_i^{R,\ell}(\infty)\end{equation}
Notice that the subset $R$ itself is actually a function of $\ell$, however we have preferred not to indicate such dependence to avoid too heavy notations.
The \emph{Optimal Stubborn Agent Placement} (OSAP) is then formally defined as
\begin{equation}\label{eq:OSAP}
 \max_{\ell \in I \setminus S^0} H(\ell).
\end{equation}
In this  optimization problem, for any different choice of $\ell$, the block matrices $Q^{11}$ and $Q^{12}$ change and a new matrix inversion $(I-Q^{11})^{-1}$ needs to be performed. Such matrix inversions require global information about the network and are not feasible for a distributed implementation. In this paper, we propose a fully decentralized algorithm for the computation of asymptotic opinions and for solving the optimization problem which is based on exploiting a classical analogy with electrical circuits. Under such analogy, we can interpret $x_i^{R,\ell}(\infty)$ as the voltage at node $i$ when nodes in $S^{0}$ are kept at voltage $0$ while $\ell$ is kept at voltage $1$. This interpretation results to be quite useful as it allows to use typical ``tricks'' of electrical circuits (e.g. parallel and series reduction, glueing).

In order to use the electrical circuit analogy,  we will need to make an extra ``reciprocity'' assumption on the weights $Q_{ij}$ assuming that they can be represented through a symmetric matrix $C\in \R^{I\times I}$ (called the conductance matrix) with non negative elements and  $C_{ij}>0$ iff $\{i,j\}\in E$ by imposing
\begin{equation}\label{reciprocity}Q_{ij}=\frac{C_{ij}}{\sum_jC_{ij}}\,,\quad i\in I\,,\; j\in I\end{equation}
The value $C_{ij}=C_{ji}$ can be interpreted as a measure of the ``strength'' of the relation between $i$ and $j$. For two regular nodes connected by an edge, the interpretation is a sort of reciprocity in the way the nodes trust each other.
Notice that $C_{ij}$ when $i\in S$ is not used in defining the weights, but is anyhow completely determined by the symmetry assumption. Finally, the terms $Q_{ij}$ when $i,j\in S$ do not play any role in the sequel and for simplicity we can assume they are all equal to $0$.
By the definition~\eqref{reciprocity} and from matrix $C$ we are actually defining a square matrix $Q\in \real^{I\times I}$. Compactly, if we consider the diagonal matrix $D_{C\1}\in\R^{I\times I}$ defined by $(D_{C\1})_{ii}=(C\1)_{i}$, where $\1$ is all ones vector with appropriate dimension, the extension is obtained by putting $Q=D_{C\1}^{-1}C$. The matrix $Q$ is said to be a {\em time-reversible} stochastic matrix in the probability jargon.
The special case of uniform weights considered before fits in this framework, by simply choosing $C=A_G$, where $A_G$ is the adjacency matrix of the graph. In this case all edges have equal strengths and the resulting time-reversible stochastic matrix $Q$ is known as the simple random walk (SRW) on $G$.


\section{The electrical network analogy}\label{sect:electrical}
In this section we briefly recall the basic notions of electrical circuits and we illustrate the relation with our problem.
A connected graph $G=(I,E)$  together with a conductance matrix $C\in\R^{I\times I}$ can be interpreted as an electrical circuit where an edge $\{i,j\}$ has electrical conductance $C_{ij}=C_{ji}$ (and thus resistance $R_{ij}=C_{ij}^{-1}$). The pair $(G,C)$ will be called an {\it electrical network} from now on.

An {\it incidence matrix} on $G$ is any matrix
$B\in\{0,+1, -1\}^{E\times I}$ such that $B\1=0$ and $B_{ei}\neq 0$ iff $i\in e$. It is immediate to see that given $e=\{i,j\}$, the $e$-th row of $B$ has all zeroes except $B_{ei}$ and $B_{ej}$: necessarily one of them will be $+1$ and the other one $-1$ and this will be interpreted as choosing a direction in $e$ from the node corresponding to $+1$ to the one corresponding to $-1$. Define $D_C\in\R^{E\times E}$ to be the diagonal matrix such that $(D_C)_{ee}=C_{ij}=C_{ji}$ if $e=\{i,j\}$. A standard computation shows that $B^*D_CB=D_{C\1}-C$.

On the electrical network $(G,C)$ we now introduce current flows. Consider a vector $\eta\in\R^I$ such that $\eta^*\1=0$: we interpret $\eta_i$ as the input current injected at node $i$ (if negative being an outgoing current). Given $C$ and $\eta$, we can define the voltage $W\in\R^I$ and the current flow $\Phi\in \R^E$ in such a way that the usual Kirchoff and Ohm's  law are satisfied on the network. Compactly, they can be expressed as
\begin{equation*}\label{electrical relations 2} \left\{ \begin{array}{l}B^*\Phi=\eta\\ D_CBW=\Phi\end{array}\right.
\end{equation*}
Notice that $\Phi_e$ is the current flowing on edge $e$ and sign is positive iff flow is along the conventional direction individuated by $B$ on edge $e$. Coupling the two equations we obtain $(D_{C\1}-C)W=\eta$ which can be rewritten as
\begin{equation}\label{harmonic}L(Q)W=D_{C\1}^{-1}\eta\end{equation}
where $L(Q):=I-Q$ is the so called Laplacian of $Q$.
Since the graph is connected,  $L(Q)$ has rank $\card{I}-1$ and $L(Q)\1=0$. This shows that \eqref{harmonic} determines $W$ up to translations.
Notice that $(L(Q)W)_i=0$ for every $i\in I$ such that $\eta_i=0$. For this reason, in analogy with the Laplacian equation in continuous spaces, $W$ is said to be {\em harmonic} on $\{i\in I\,|\, \eta_i=0\}$. Clearly, given a subset $S\subseteq I$ and a $W\in\R^I$ which is harmonic on $S^c$, we can always interpret $W$ as a voltage with input currents given by $\eta =D_{C\1}L(Q)W$ which will necessarily be supported on $S$. $W$ is actually the only voltage harmonic on $S^c$ and with assigned values on $S$.

It is often possible to replace an electrical network by a simplified one without changing certain quantities of interest. An useful operation is {\em gluing:} if we merge vertices having the same voltage into a single one, while keeping all existing edges, voltages and currents are unchanged, because current never flows between vertices with the same voltage. Another useful operation is replacing a portion of the electrical network connecting two nodes $h,k$ by an {\em equivalent resistance}, a single resistance denoted as $\supscr{R}{eff}_{hk}$ which keeps the difference of voltage $W(h)-W(k)$ unchanged. 
Following our interpretation of Eq.~\eqref{reciprocity}, we see $(\supscr{R}{eff}_{hk})^{-1}$ as measuring of the strength of the relation between two nodes $h,k$.
Two basic cases of this operation consist in deleting degree two nodes by adding resistances (series law) and replacing multiple edges between two nodes with a single one having conductance equal to the sum of the various conductances (parallel law). These techniques will be heavily used in deriving our algorithm.

\subsection{Social networks as electrical networks}

We are now ready to state the relationship between social and electrical networks.
Consider a connected graph $G=(I,E)$, a subset of stubborn agents $S\subseteq I$, and
a stochastic time-reversible matrix $Q$ having conductance matrix $C$.
%
Notice that relation (\ref{fixed point}) can also be written as
\begin{equation}\label{harmonic3} L(Q)\left(\begin{matrix}x^{R,\ell}(\infty)\\ x^S(0)\end{matrix}\right)=\left(\begin{matrix}0\\ \theta\end{matrix}\right)\end{equation}
for some suitable vector $\theta\in\R^S$ (where $\theta$ represents the initial opinions of the stubborn agents). Comparing with (\ref{harmonic}), it follows that  $x^{R,\ell}(\infty)$ can be interpreted as the voltage at the regular agents when stubborn agents have fixed voltage $x^S(0)$ or, equivalently, when input currents $D_{C\1}\theta$ are applied to the stubborn agents. Because of equation (\ref{harmonic3}), the vector $x^{R,\ell}(\infty)$ is called the {\it harmonic} extension of $x_S(0)$ and the function $H$ defined in \eqref{H_definition} the  {\em Harmonic Influence Centrality (HIC)}.

Thanks to the electrical analogy we can compute the asymptotic opinion of the agents by computing the voltages in the graph seen as an electrical network.  From now on, we will stick to this equivalence and we will exclusively talk about an electrical network $(G,C)$ with a subset $S^{0}\subseteq I$ of nodes at voltage $0$. For any $\ell \in I\setminus S_0$, $W^{(\ell)}$ denotes the voltage on $I$ such that $W^{(\ell)}(i)=0$ for every $i\in S^{0}$ and $W^{(\ell)}({\ell})=1$.
Using this notation and the association between limiting opinions and electric voltages provided in Eqs. \eqref{harmonic} and \eqref{harmonic3}, we can express the Harmonic Influence Centrality of node $\ell$ as
\[H(\ell) = \sum_{i\in I, i\ne \ell} W^{(\ell)}(i).\]

Consider the following simple example.

\begin{example}[Line graph]\label{exa:line}
Consider the electrical network $(L, A_L)$ where $L$ is the line graph $L=(\fromto{0}{n},E_L)$. Moreover,  assume $S^0=\{0\}$ and $S^1=\{n\}$. Then, the induced voltage $W$ satisfies $W(i)=\frac{i}n$, and thus $H(n)=\frac{n+1}2$.
\end{example}

The following result, which is an immediate consequence of (\ref{harmonic3}), provides a formula that is useful in computing the voltages.
\begin{lem}[Voltage scaling]\label{lem:W-scaling}
Consider the electrical network $(G,C)$ and a subset $S^0\subseteq I$. Let $W^{(\ell)}$ be the voltage which is $0$ on $S^0$ and $1$ on $\ell$. Let $W$ be another voltage such that
$W(s)=w_0$ if $s\in S^0$ and $W({\ell})=w_1$. Then, for every node $i \in I$ it holds
\be \label{pot_pas} W(i)=w_0+(w_1-w_0)W^{(\ell)}(i)
\ee
\end{lem}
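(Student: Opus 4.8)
The plan is to use nothing more than the linearity of the harmonic equation \eqref{harmonic3} and the uniqueness of the harmonic extension with prescribed values on the stubborn set $S=S^0\cup\{\ell\}$, both recorded in Section~\ref{sect:electrical}. Concretely, I would introduce the candidate vector $V\in\R^I$ defined by $V(i):=w_0+(w_1-w_0)W^{(\ell)}(i)$ and prove that $V=W$ by checking that $V$ is a voltage harmonic on $S^c$ and that it matches $W$ on $S$.

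First I would check the boundary values. Since $W^{(\ell)}(s)=0$ for every $s\in S^0$, we get $V(s)=w_0$ for $s\in S^0$; and since $W^{(\ell)}(\ell)=1$, we get $V(\ell)=w_0+(w_1-w_0)=w_1$. Hence $V$ and $W$ agree on $S$. Next I would check harmonicity on $S^c$: writing $V=w_0\1+(w_1-w_0)W^{(\ell)}$ and using linearity of $L(Q)$ together with $L(Q)\1=0$, one obtains $L(Q)V=(w_1-w_0)L(Q)W^{(\ell)}$, and since $W^{(\ell)}$ is harmonic on $S^c$ the right-hand side vanishes there; thus $L(Q)V$ vanishes on $S^c$, i.e.\ $V$ is harmonic on $S^c$.

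To conclude, $V$ and $W$ are both voltages harmonic on $S^c$ taking the same values on $S$, so by the uniqueness of the harmonic extension with assigned boundary values (noted after \eqref{harmonic}, and valid here because $G$ is connected, so that $L(Q)$ has rank $\card{I}-1$) they must coincide on all of $I$; this is exactly \eqref{pot_pas}. I do not expect any genuine obstacle here: the whole argument is a rephrasing of linearity plus uniqueness, so the substantive content — existence and uniqueness of harmonic extensions on connected graphs — has already been established in Section~\ref{sect:electrical} and is simply being invoked.
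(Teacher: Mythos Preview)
Your argument is correct and is exactly the verification the paper has in mind: it states the lemma as ``an immediate consequence of \eqref{harmonic3}'' without further proof, and your linearity-plus-uniqueness check is precisely the natural unpacking of that remark. Nothing to add.
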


\section{The electrical analogy on trees}\label{sect:electrical-trees}
The case when the graph $G$ is a {\em tree} is very important to us, since we are able to prove a number of useful results on the solution of the OSAP problem and it will play a pivotal role in the introduction of our message passing algorithm for general graphs.
To begin with, we establish some useful notation and basic results.

In the foregoing, we assume to have fixed a tree $T=(I,E)$, a conductance matrix $C$ and the subset $S^0\subseteq I$ of $0$ voltage nodes. We next define subsets of nodes of the given tree, which enable us to isolate the effects of the upstream and downstream neighbors of a node in computing its harmonic centrality and its voltage. Given a pair of distinct nodes $i,j\in I$, we let $I^{<ij}$ denote the subset of nodes that form the subtree rooted at node $i$ that does not contain node $j$. Formally,
$$I^{<ij}:=\{h\in I\;|\; \hbox{\rm the simple path from $h$ to $j$ goes through}\; i\}.$$
We also define $I^{ij>}:=I^{<ji}$,
$ I^{ij<}:=(I^{ij>})^c\cup\{j\},$ $I^{>ij}:=I^{ji<}$, and $I^{i<j}:=I^{ij<}\cap I^{>ij}$.
Figure~\ref{fig:potentials_example} illustrates these definitions.

\begin{figure}
\centering
\includegraphics[scale=0.5]{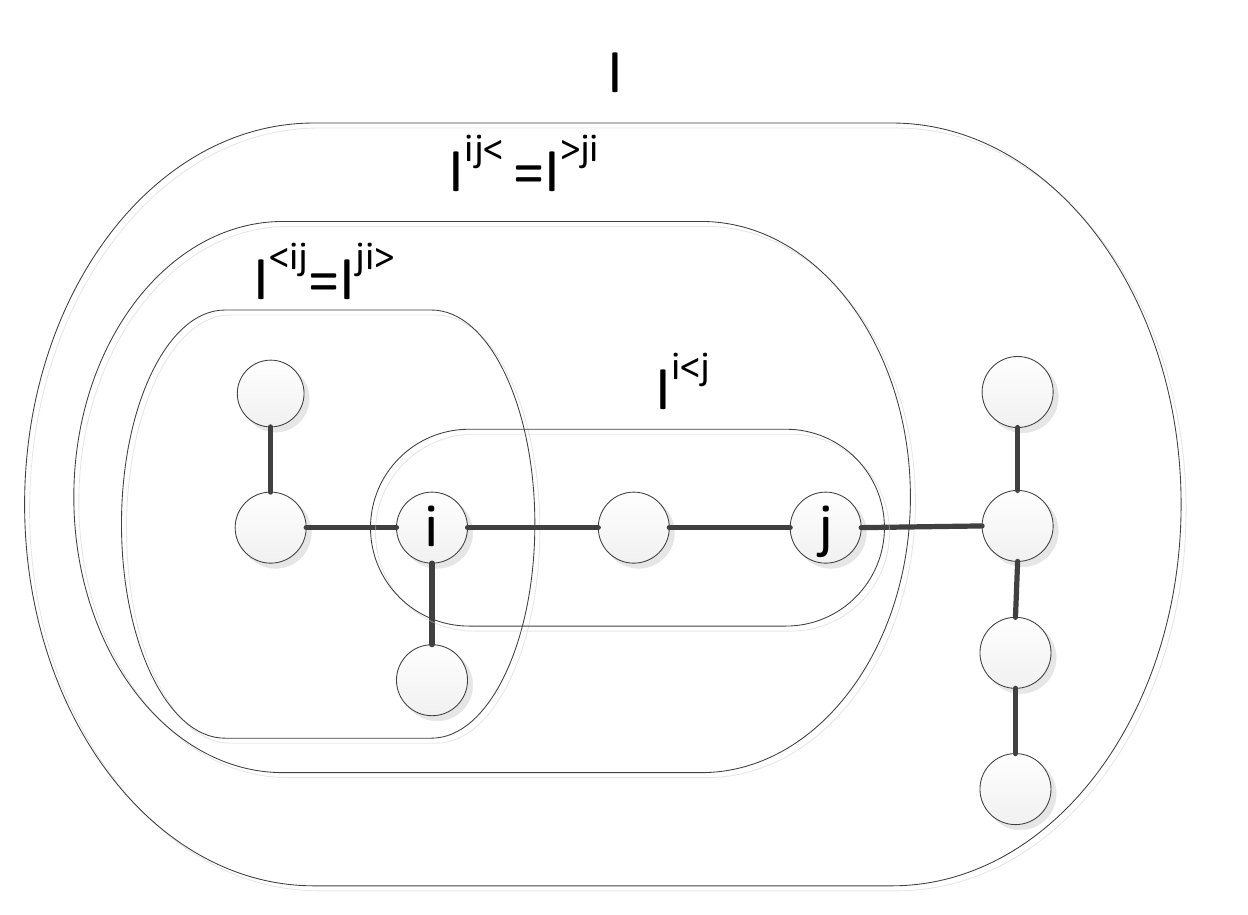}
\caption{An example of tree presenting the notation of the subsets of I}\label{fig:potentials_example}
\end{figure}
The induced subtrees is denoted using the same apex $T^{<ij}$ and so on; similarly the HIC of the nodes on each of the trees above is denoted as $H^{<ij}(\cdot )$ and so on.
Finally, we use the notation $R^{\rm eff}_{<ij}$ to denote the effective resistance inside $T^{<ij}$ between $S^0\cap I^{<ij}$ (considered as a unique collapsed node) and node $i$. We will conventionally interpret this resistance as infinite in case $S^0\cap I^{<ij}=\emptyset$. Analogously, we define $R^{\rm eff}_{ij>}:=R^{\rm eff}_{<ji}$.

Given a pair of distinct nodes $i,\,j\in I$, consider the  two voltages $W^{(i)}$ and $W^{(j)}$. If we restrict them to $T^{<ij}$, we may interpret  them as two voltages on $T^{<ij}$ which are $0$ on $S^0$ and take values in node $i$, respectively, $W^{(i)}(i)=1$ and $W^{(j)}(i)$. It follows by applying Lemma  \ref{lem:W-scaling} that
\be\label{scaling ij}W^{(j)}(\ell)=W^{(j)}(i)W^{(i)}(\ell)\quad \forall \ell\in I^{<ij}\ee
Moreover, $W^{(j)}(i)$ can be computed through effective resistances replacing the circuit determined by the subtree $T^{<ij}$, by an equivalent circuit represented by a line graph with three nodes $S^0$, $i$, and $j$ as in Figure~\ref{fig:equivalent_circuitvsd}.
\begin{figure}
\centering
\includegraphics[scale=0.5]{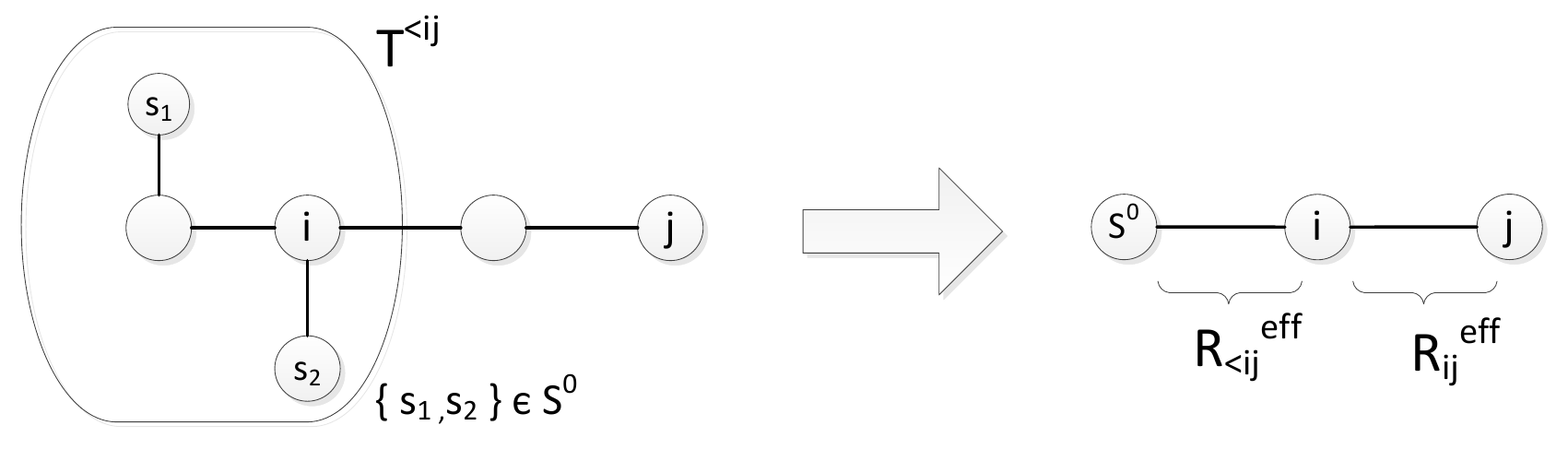}
\caption{A subtree equivalently represented as a line graph.}\label{fig:equivalent_circuitvsd}
\end{figure}
We recall that collapsing all nodes of $S^0$ in a single node is possible since they all have the same voltage. Moreover,  by definition, the edge $\{S^0,i\}$ has resistance $R^{\rm eff}_{<ij}$, while $\{i,j\}$ has resistance $R^{\rm eff}_{ij}$. Therefore, since the current flowing along the two edges is the same, Ohm's law yields
$$\frac{W^{(j)}(j)}{R^{\rm eff}_{<ij}+R^{\rm eff}_{ij}}=\frac{W^{(j)}(i)}{R^{\rm eff}_{<ij}}$$
yielding
\be\label{scaling voltage}W^{(j)}(i)=\frac{R^{\rm eff}_{<ij}}{R^{\rm eff}_{<ij}+R^{\rm eff}_{ij}}\ee
(equal to $1$ in case $S^0\cap I^{<ij}=\emptyset$).
From relations (\ref{scaling ij}) and (\ref{scaling voltage}), later on we will derive iterative equations for the computation of voltages on a tree.
%
In the rest of this section we prove some properties of the OSAP on a tree, which provide a significant simplification at the computational level.

\subsection{Nodes in $S^0$ can be assumed to be leaves}

A first  general remark to be made is that nodes in $S^0$ break the computation of the HIC into separate non-interacting components. Indeed, the induced subgraph $T_{|I\setminus S_0}$ is a forest composed of subtrees
 $\{T_h=(J_h,E_h)\}_{h\in\{1,\ldots,n\}} $.
For every ${h\in\{1,\ldots,n\}} $, define the set $S^0_h$ as the set of type 0 stubborn nodes that are adjacent  to nodes in $J_h$ in the graph $G$, that is,
$$S^0_h:=\{i \in S^0 | \exists j \in J_h: \{i,j\} \in E\}.$$
Then, define the tree $\widehat{T}_h=G|_{S^0_h \cup I_h}$, which is therefore the tree $T_h$ augmented with its type 0 stubborn neighbors in the original graph $G$.
An example of this procedure is shown in Figure~\ref{fig:derivation_example}.
\begin{figure*}
\centering
\includegraphics[scale=0.3]{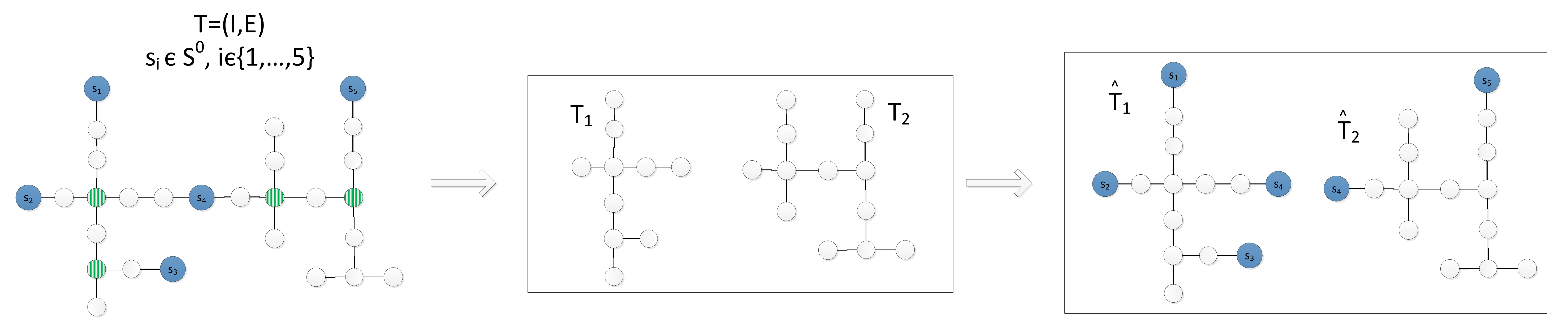}
\caption{A tree with type 0 stubborn nodes in blue, together with its decomposition according to Proposition~\ref{same_pot}.
}\label{fig:derivation_example}
\end{figure*}
The following result shows that it is sufficient to compute the HIC on the subtrees  $\widehat{T}_h$; in its statement we denote by $\widehat{H}_h$ the HIC function on $\widehat{T}_h$.
\begin{prop}[Tree decomposition]\label{same_pot} For every $h=1,\dots ,n$, and for very $\ell \in I_h$
$$\widehat{H}_h(\ell)=H(\ell) \qquad \forall \,\ell \in I_h.$$
\end{prop}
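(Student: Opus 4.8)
The plan is to show that the harmonic extension problem on $T$ decouples over the components $\widehat T_h$, so that the voltage $W^{(\ell)}$ on the original tree, when restricted to $I_h$, coincides with the voltage $\widehat W^{(\ell)}$ computed on $\widehat T_h$ alone. Once this is established, the equality of the centrality functions is immediate by summing.

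First I would fix $h$ and $\ell\in I_h$ and consider $W^{(\ell)}$, the voltage on $T$ that is $0$ on $S^0$ and $1$ on $\ell$. The key structural observation is that, since $T$ is a tree, every simple path from a node in $I_{h'}$ with $h'\ne h$ to the node $\ell$ must pass through at least one node of $S^0$ (otherwise two distinct components $T_{h'}$ and $T_h$ would be joined by a path avoiding $S^0$, contradicting the definition of the components of the forest $T_{|I\setminus S^0}$). Using the characterization recalled after Eq.~\eqref{harmonic} — that the harmonic extension of boundary data is the unique voltage harmonic off the boundary set — together with the remark following Eq.~\eqref{fixed point2} that $[(I-Q^{11})^{-1}Q^{12}]_{hk}$ is nonzero only when there is a path from $h$ to $k$ not touching other stubborn agents, one concludes that $W^{(\ell)}(i)=0$ for every node $i$ lying in a component $T_{h'}$ with $h'\ne h$: such a node can only ``see'' stubborn agents in $S^0$ through paths hitting $S^0$ first, so its asymptotic opinion is a convex combination of zeros. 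In particular $W^{(\ell)}$ vanishes on all of $S^0\setminus S^0_h$ as well as outside $\widehat T_h$.

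Next I would check that the restriction $W^{(\ell)}|_{\widehat T_h}$ is exactly the harmonic extension solving the problem on $\widehat T_h$. Harmonicity of $W^{(\ell)}$ at a node $i\in I_h$, $i\ne\ell$, is the equation $(L(Q)W^{(\ell)})_i=0$, i.e.\ $W^{(\ell)}(i)=\sum_{j\in N_i}Q_{ij}W^{(\ell)}(j)$; since in the tree $T$ every neighbor $j$ of $i\in I_h$ lies either in $I_h$ or in $S^0_h$ (a neighbor in $S^0$ must by definition be in $S^0_h$, and there are no edges from $I_h$ to other components), this is precisely the harmonicity equation for the graph $\widehat T_h$ with the same local weights. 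The boundary data also match: $W^{(\ell)}$ is $0$ on $S^0_h$ and $1$ on $\ell$. By the uniqueness of the harmonic extension with prescribed boundary values, $W^{(\ell)}|_{\widehat T_h}=\widehat W^{(\ell)}$. Therefore
\[
H(\ell)=\sum_{i\in I,\, i\ne\ell}W^{(\ell)}(i)=\sum_{i\in I_h,\, i\ne\ell}W^{(\ell)}(i)=\sum_{i\in I_h,\, i\ne\ell}\widehat W^{(\ell)}(i)=\widehat H_h(\ell),
\]
where the second equality uses that $W^{(\ell)}$ vanishes on every node outside $I_h$ (including all of $S^0$).

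The only delicate point — and the step I expect to be the main obstacle — is the careful justification that $W^{(\ell)}$ really is zero on the other components, i.e.\ that no ``leakage'' of the voltage $1$ at $\ell$ can reach them. This is intuitively clear for a tree because $S^0$ separates the components, but writing it cleanly requires either invoking the path characterization of $(I-Q^{11})^{-1}Q^{12}$ from the text or, alternatively, a short maximum-principle argument: a harmonic function on a finite connected graph attains its extrema on the boundary, so on a component $T_{h'}$ with $h'\ne h$ whose only boundary nodes (the adjacent $S^0$ nodes) carry voltage $0$, the voltage is identically $0$. Everything else is routine bookkeeping with the tree structure and the uniqueness statement for harmonic extensions.
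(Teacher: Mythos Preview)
Your proposal is correct and follows essentially the same approach as the paper: the key observation in both is that $W^{(\ell)}(j)=0$ for every $j\notin I_h$ because any path from such a $j$ to $\ell$ must pass through $S^0$. The paper's proof stops there, asserting that ``this observation proves the result,'' whereas you additionally spell out the verification that $W^{(\ell)}|_{\widehat T_h}$ satisfies the harmonic equations and boundary data on $\widehat T_h$ and hence equals $\widehat W^{(\ell)}$ by uniqueness---a step the paper takes for granted.
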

\begin{proof}
Given $\ell \in I_h$, the voltage $W^{(\ell)}(j)$ is zero for every $j$ such that the (unique) path from $j$ to $\ell$ goes through a type 0 stubborn node. This implies that $W^{(\ell)}(j)=0$ for every $j\in I\setminus I_h$.
This observation proves the result.
\end{proof}
Thanks to this property it is sufficient to study the Harmonic Influence Centrality on the subtrees $\widehat{T_h}$ and then compute
$$\max_{\ell \in I \setminus S^0} H(\ell)=\max_{h\in \{1,\ldots,n\}} \max_{\ell\in I_h} \widehat{H}(\ell).$$
Consequently, we will assume from now on that {\em stubborn agents are located in the leaves}, without any loss of generality.

\subsection{Further properties in case of unitary resistances}

In this paragraph we present a number of results characterizing the OSAP in the special, but important, case when all resistances are unitary. We recall that the social interpretation of this case is when all edges expressing relationship among regular nodes have equal strength.

Before proceeding, let us first consider the trivial situation in which there is {\em only one type 0 stubborn} (which is a leaf of $T$, without loss of generality). In this case, the maximizer of $H$ in $I$ is the node {\em adjacent} to the node in $S^0$. Indeed, let $s$ be the node in $S^0$ and $i$ its unique neighbor: then $W^{(i)}(\ell)=1$ for all $\ell\neq s$, that is $H(i)=\card{I}-1$, which is obviously the largest achievable value.
%
%
In order to deal with more than one stubborn, we need the following preliminary result.
\begin{lem}\label{lem:R_eq}
Let $T=(I,E)$ be a tree with unitary resistances and $a,b\in I$. Let $W$ be the voltage such that $W(a)=0$ and $W(b)=1$. Then, the effective resistance between $a$ and $b$
satisfies
\begin{equation*}\label{eq:R_eq}
R^{\rm eff}_{ab}\leq 2 \sum_{i \in I}W(i)-1.
\end{equation*}
\end{lem}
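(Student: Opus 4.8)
The plan is to prove the inequality by induction on $\card{I}$, the number of nodes of the tree, using the tree structure to peel off a vertex of the path between $a$ and $b$ and reduce to a smaller tree. Since $T$ is a tree, there is a unique simple path $a = p_0, p_1, \dots, p_m = b$ joining $a$ and $b$, and with unitary resistances the portion of the network hanging off each $p_t$ contributes nothing to the effective resistance along the path, so that $R^{\rm eff}_{ab} = m$, the graph distance $\dst(a,b)$. Hence the statement to be proved is equivalent to
\[
\dst(a,b) \le 2\sum_{i\in I}W(i) - 1,
\]
where $W$ is harmonic on $I\setminus\{a,b\}$ with $W(a)=0$, $W(b)=1$. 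The base case $\card{I}=2$ is immediate: then $a,b$ are adjacent, $\dst(a,b)=1$ and $\sum_i W(i) = W(a)+W(b) = 1$, so the bound reads $1 \le 1$.

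For the inductive step I would pick a leaf $z$ of $T$ that is \emph{not} on the path from $a$ to $b$ and not equal to $a$ or $b$; if no such leaf exists then $T$ is itself the line graph on the path $p_0,\dots,p_m$, a case that can be checked directly using Example~\ref{exa:line} (there $\sum_i W(i) = \sum_{t=0}^m \frac{t}{m} = \frac{m+1}{2}$, so $2\sum_i W(i)-1 = m = \dst(a,b)$, with equality). Otherwise, let $z$ be such a leaf with neighbor $y$. Deleting $z$ gives a smaller tree $T' = T_{|I\setminus\{z\}}$ which still contains $a$, $b$ and the whole path between them, so $\dst_{T'}(a,b) = \dst_T(a,b)$. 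Let $W'$ be the voltage on $T'$ that is $0$ on $a$ and $1$ on $b$; by the induction hypothesis $\dst(a,b) \le 2\sum_{i\in I\setminus\{z\}} W'(i) - 1$. It therefore suffices to show
\[
\sum_{i\in I\setminus\{z\}} W'(i) \;\le\; \sum_{i\in I} W(i),
\]
i.e. that adding back the pendant vertex $z$ does not decrease the sum of voltages. Since $z$ is a leaf with a single neighbor $y$, harmonicity at $z$ forces $W(z) = W(y)$ (no current flows into a pendant vertex), and by the gluing principle removing $z$ leaves the voltages at all other nodes unchanged, so $W(i) = W'(i)$ for all $i\ne z$ and $W(z) = W(y) = W'(y) \ge 0$. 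Hence $\sum_{i\in I}W(i) = \sum_{i\in I\setminus\{z\}}W'(i) + W(z) \ge \sum_{i\in I\setminus\{z\}}W'(i)$, which closes the induction.

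The main obstacle I anticipate is the clean justification of two structural facts that I have used above and that the electrical-analogy section only states informally: first, that with unitary resistances $R^{\rm eff}_{ab}$ equals the path length $\dst(a,b)$ because the subtrees dangling off the path carry no current (this follows from the gluing/series laws recalled in Section~\ref{sect:electrical}, since every such subtree meets the path in a single cut vertex, so all its nodes share that vertex's voltage); and second, that deleting a leaf $z$ with neighbor $y$ leaves the voltages at the remaining nodes exactly equal to those in the reduced network, and forces $W(z)=W(y)\ge 0$ — again a consequence of harmonicity at a degree-one node together with the fact that all voltages lie in $[0,1]$ (convex combinations of the boundary values $0$ and $1$). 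Once these are in place the induction is routine, and the equality case is exactly the line graph, consistent with Example~\ref{exa:line}. An alternative, non-inductive route would be to write $\sum_i W(i) = \sum_{t=0}^m W(p_t) + \sum_{i \notin \text{path}} W(i)$, bound $\sum_{i\notin\text{path}} W(i) \ge 0$, and observe that $W(p_t)$ is increasing in $t$ with $W(p_0)=0$, $W(p_m)=1$, so that $W(p_t) + W(p_{m-t}) \ge 1$ for symmetric reasons would give $2\sum_{t} W(p_t) \ge m+1$; but making the ``$W(p_t)+W(p_{m-t})\ge 1$'' step rigorous on a general tree is essentially as delicate as the induction, so I would favor the inductive argument above.
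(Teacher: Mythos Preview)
Your inductive proof is correct, but the paper's argument is shorter and, ironically, is essentially the ``alternative, non-inductive route'' you sketch at the end and then abandon as too delicate. The key point you already articulate---that the subtrees hanging off the $a$--$b$ path carry no current and hence all their nodes share the voltage of the path vertex they attach to---implies not just $R^{\rm eff}_{ab}=m$ but also that the voltages \emph{on the path itself} are exactly those of the line graph: $W(p_t)=t/m$. Once you have this, there is nothing delicate left: drop the nonnegative off-path contributions and compute
\[
2\sum_{i\in I}W(i)-1 \;\ge\; 2\sum_{t=0}^{m}\frac{t}{m}-1 \;=\; m \;=\; R^{\rm eff}_{ab}.
\]
Your concern about making ``$W(p_t)+W(p_{m-t})\ge 1$'' rigorous disappears because $W(p_t)=t/m$ holds with equality, not merely monotonicity. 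So both proofs are valid; your induction buys nothing extra here, whereas the paper's direct computation exploits the structural fact you had already identified to finish in two lines.
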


\begin{proof}
%
Consider the simple path $a,j_1,\dots ,j_{t-1},b$ connecting $a$ to $b$ in $T$. Clearly, by Example \ref{exa:line}, we have that $W(j_s)=s/t$. Therefore,
$$2 \sum_{i \in I}W(i)-1\geq 2 \sum_{s=1}^{t-1}W(j_s)+1
=t= R^{\rm eff}_{ab}.$$
\end{proof}
We are now ready to state and prove the main result of this section, which provides a set of necessary conditions for a node to be a solution to OSAP: {\em
provided there are at least two stubborn, the solution lies on a path connecting two of them and is a node of degree at least three, unless all paths connecting  two stubborn are made of nodes of degree two.}
\begin{thm}[Necessary conditions for OSAP] \label{propos}
Consider the OSAP problem~\eqref{eq:OSAP} over a tree $T=(I,E)$ with all unitary resistances. Assume that $S^0\subseteq I$ is a subset of leaves and that $|S^0|\geq2$. Define
\begin{align*}K=\{i \in I | \exists s',s'' \in S^0  \text{such that $i$ belongs to a simple path} \\ \text{ between $s'$ and $s''$}\}.\end{align*} Then,
 $$ \displaystyle \argmax_{\ell \in I} H(\ell) = \argmax_{\ell \in K} H(\ell).$$
Moreover, if $K':=\{ i \in K | d_i\geq3\}\neq \emptyset$, then
 $$ \argmax_{\ell \in I} H(\ell) = \argmax_{\ell \in K'} H(\ell).$$
\end{thm}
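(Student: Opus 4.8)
The plan is to prove both claims by a common mechanism: given any candidate maximizer $\ell$ that lies outside the prescribed set, exhibit another node whose HIC is strictly larger (or at least no smaller), thereby reducing the search to $K$ and then to $K'$. For the first claim, suppose $\ell \notin K$. Since $\ell$ is not on any simple path between two stubborn leaves, there is a unique edge on the path from $\ell$ toward the ``bulk'' of the stubborn set; removing it splits $T$ into the component $T_\ell$ containing $\ell$ and the component containing (at least two, in fact all but possibly some) stubborn nodes. I would locate the node $i$ where the subtree hanging off $\ell$'s side attaches — more precisely, walk from $\ell$ toward $S^0$ until reaching the first node $i$ that lies on a path between two stubborn nodes, i.e. the first node of $K$. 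Then by the voltage scaling relation~\eqref{scaling ij}, for every node $h$ on $\ell$'s side of $i$ we have $W^{(\ell)}(h) = W^{(\ell)}(h)$ trivially, but the point is that moving the unit source from $\ell$ to $i$ only raises voltages: by Lemma~\ref{lem:W-scaling} and~\eqref{scaling voltage} one shows $W^{(i)}(h) \ge W^{(\ell)}(h)$ for all $h$, with the increase strict somewhere (the whole segment between $\ell$ and $i$, plus everything beyond $i$ that $\ell$ could only reach through $i$). Summing over $h \in I$ gives $H(i) > H(\ell)$. Hence no maximizer lies outside $K$, establishing $\argmax_{\ell\in I} H(\ell) = \argmax_{\ell\in K}H(\ell)$.

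For the second claim, assume $K' \neq \emptyset$ and let $\ell \in K \setminus K'$, so $\ell$ has degree two and lies on a path between two stubborn leaves. Because $K'\neq\emptyset$, somewhere along the connected structure $K$ there is a branch node $i$ of degree $\ge 3$; take $i\in K'$ to be a node reachable from $\ell$ through a chain of degree-two nodes all lying in $K$ (such an $i$ exists by connectivity of the ``stubborn skeleton''). The key step is to compare $H(\ell)$ with $H(i)$: I would again use the series reduction and the equivalent three-node line picture of Figure~\ref{fig:equivalent_circuitvsd}, together with~\eqref{scaling ij}–\eqref{scaling voltage}, to express the difference $H(i) - H(\ell)$. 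Moving the source along a degree-two chain from $\ell$ toward the branch node $i$ should again not decrease the HIC, and the presence of the extra branch at $i$ (which carries its own subtree with some stubborn nodes making its effective resistance finite, using here that $S^0$ consists of leaves so each pendant direction off $i$ that eventually hits $S^0$ contributes) forces a strict gain — this is where Lemma~\ref{lem:R_eq}, bounding effective resistances by HIC values, is designed to be used, to control the voltages contributed by the side branches.

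The main obstacle I anticipate is the bookkeeping for the ``moving the source'' monotonicity argument: one must argue carefully that relocating the unit-voltage node from $\ell$ to an adjacent (or nearby) node $i$ never decreases $W(h)$ at \emph{any} node $h$, including nodes on the far side of $i$ whose voltages are governed by~\eqref{scaling ij} with a factor $W^{(\ell)}(i)<1$ that becomes $1$ after the move, \emph{and} nodes in side branches of $i$ whose voltages can change sign of monotonicity if not handled correctly. The clean way is probably to treat the tree rooted at $i$, decompose $H(\ell)$ and $H(i)$ as sums of the per-subtree contributions $H^{<\cdot\,\cdot}$, and observe that each such contribution is a monotone function of the single scalar $W(i)$, which only increases when the source moves onto (or toward) $i$; Lemma~\ref{lem:R_eq} then supplies the strict inequality in the degree-$\ge 3$ case by showing the newly ``energized'' branch adds a positive, quantifiable amount. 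Once monotonicity per subtree is nailed down, both displayed equalities follow by iterating the single-step move along the appropriate path.
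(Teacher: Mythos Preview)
Your argument for the first claim is essentially the paper's: pick the nearest node $i\in K$ to $j\notin K$, note that the subtree on the $j$-side of $i$ contains no element of $S^0$, and use the scaling relation~\eqref{scaling ij} to compare the two decompositions of $H$. That part is fine.

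The second claim is where your plan breaks down. You propose to pick some $i\in K'$ at one end of the degree-two chain containing $\ell$ and argue that ``moving the source along a degree-two chain from $\ell$ toward the branch node $i$ should again not decrease the HIC.'' This monotonicity is \emph{false} in general. Along a maximal chain $j_1,\dots,j_n$ of degree-two nodes between two branch (or stubborn) endpoints $a,b$, the function $x\mapsto H(j_x)$ is not monotone: it is \emph{convex}. In an asymmetric situation (say the subtree behind $a$ is large and that behind $b$ is small) the minimum of $H(j_x)$ sits near $b$, and sliding the source from a node close to $a$ toward $b$ strictly decreases $H$. Since you let $i$ be either endpoint, your single-step ``move toward $i$'' comparison can go the wrong way, and no amount of per-subtree bookkeeping will rescue it, because the phenomenon is genuinely two-sided.

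What the paper does instead is exactly this convexity computation: write $H(j_x)$ explicitly using~\eqref{scaling ij}, \eqref{scaling voltage} and the line-graph voltages of Example~\ref{exa:line}, extend it to real $x$, and differentiate twice. The second derivative is
\[
\frac{2 R^{<ab}\bigl( H^{<ab}(a)-\tfrac{R^{<ab}+1}{2}\bigr)}{(R^{<ab}+x)^2}
+\frac{2 R^{ab>}\bigl( H^{ab>}(b)-\tfrac{R^{ab>}+1}{2}\bigr)}{(R^{ab>}+x)^2},
\]
and Lemma~\ref{lem:R_eq} is used precisely here, to certify that each numerator is nonnegative (i.e.\ $2H^{<ab}(a)\ge R^{<ab}+1$ and similarly on the other side). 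That is the actual role of the lemma: it gives convexity of $H$ along the chain, hence $\max_x H(j_x)$ is attained at an endpoint $a$ or $b$, which lies in $K'\cup S^0$. It is not, as you surmise, a device to extract a ``strict gain'' from the extra branch at $i$. Your plan needs to replace the monotonicity step by this convexity argument (or an equivalent one that does not presuppose a preferred direction along the chain).
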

\begin{proof}
Let $j\not \in K$. Then, there exists $i\in K$ such that every path from $j$ to $S^0$ contains $i$. We claim that $H(i)>H(j)$, proving that the optimum must belong to $K$.
Harmonic influences can be computed as follows
\begin{align*}\label{322}
H(j)= &\sum_{y \in I^{<ij}\setminus\{i\}} W^{(j)}(y) + \sum_{x \in I^{i<j}} W^{(j)}(x) + |I^{ij>}|-1\\
H(i)=&\sum_{y \in I^{<ij}\setminus\{i\}} W^{(i)}(y) + |I^{<ij}|+|I^{ij>}|-1\,.
\end{align*}
We can then observe that, as a consequence of the scaling formula~\eqref{scaling ij},
\begin{equation*}\label{324}
\sum_{y \in I^{<ij}\setminus\{i\}}  W^{(j)}(y)<\sum_{y \in I^{<ij}\setminus\{i\}}  W^{(i)}(y),
\end{equation*}
while, clearly,
$\sum_{x \in I^{<ij}} W^{(j)}(x) \leq |I^{<ij}|.$
These inequalities prove the claim.

Next, we need to prove that the optimal node has degree larger than two, that is, it belongs to $K'$ (provided this set is not empty). Consider a path between two nodes in $S^0$ and a maximal string of consecutive nodes of degree two in this path, denoted as $j_1, j_2, \ldots, j_n$.
Let $a,b\in S^0\cup K'$ be the two nodes such that $\{a,j_1\}, \{j_n,b\}\in E$.
For any $x=1,\dots ,n$, consider $W^{(j_x)}$ and notice that, because of
formula~\eqref{scaling ij}, there hold
$$W^{(j_x)}({\ell})=W^{(j_x)}(a)W^{(a)}({\ell}) \;\forall \ell\in I^{<ab}$$ and $$W^{(j_x)}({\ell})=W^{(j_x)}(b)W^{(b)}({\ell}) \;\forall \ell\in I^{ab>}$$
Combining these equations with Example~\ref{exa:line}, we can compute
\begin{align*}
H(j_x)&=\!\sum_{\ell\in I^{<ab}} \!W^{(j_x)}(\ell)+\!\sum_{\ell\in I^{ab>}} \!W^{(j_x)}(\ell)+\!\sum_{\ell\in I^{j_1< j_n}}\! W^{(j_x)}(\ell)\\
&=W^{(j_x)}(a)H^{<ab}(a) + W^{(j_x)}(b) H^{ab>}(b) \\ &\quad+
  \frac{1+W_a}{2}(x-1) + \frac{1+W_b}{2}(n-x) + 1.
\end{align*}
Since, by (\ref{scaling voltage}), $W(a)=\frac{R^{\rm eff}_{<ab}}{R^{\rm eff}_{<ab}+x}$ and $W(b)=\frac{R^{\rm eff}_{ab>}}{R^{\rm eff}_{ab>}+n-x+1}$, we deduce that
\begin{align*} H(j_x)&=\frac{R^{<ab}}{R^{<ab}+x} \Big(H^{<ab}(a)+\frac{x-1}2\Big)+\frac{x-1}2\\&+\frac{R^{ab>}}{R^{ab>}+n-x+1} \Big(H^{ab>}(b)+\frac{n-x}2\Big)+ \frac{n-x}2+ 1
\end{align*}
Notice that the above expression naturally determines an extension for $x\in [1, n]\subseteq \R$
and it is straightforward to compute that
\begin{align*}
 \frac{\partial^2}{\partial x^2}H(j_x)=&
\frac{2 R^{<ab}( H^{<ab}(a)-\frac{R^{<ab}+1}2)}{(R^{<ab}+x)^2}\\&+
\frac{2 R^{ab>} (H^{ab>}(b)-\frac{R^{ab>}+1}2)}{(R^{ab>}+x)^2}.
\end{align*}
Since Lemma~\ref{lem:R_eq} implies that $2 H^{<ab}(a)\ge R^{<ab}+1$ and $2 H^{ab>}(b)\ge R^{ab>}+1$, this second derivative is nonnegative and we conclude that $H(j_x)$ is convex in $x$.
Then, we conclude that the value in $a$ or in $b$ of the HIC is greater or equal than the value on the nodes having degree two. By this statement, the proof is complete.
\end{proof}

As a consequence of the necessary conditions in Theorem~\ref{propos}, in order to solve OSAP it is sufficient to compute the HIC only on a subset of nodes, which can be much smaller than the whole node set of the social network.
Example of the effectiveness of this result are provided in Figures~\ref{fig:derivation_example} and~\ref{fig:tree_cases}, where the candidate nodes to solve problem~\eqref{eq:OSAP} are pictured with green stripes: in the former example, the candidates are reduced from $22$ to $4$.
\begin{figure*}
\centering
\includegraphics[scale=0.38]{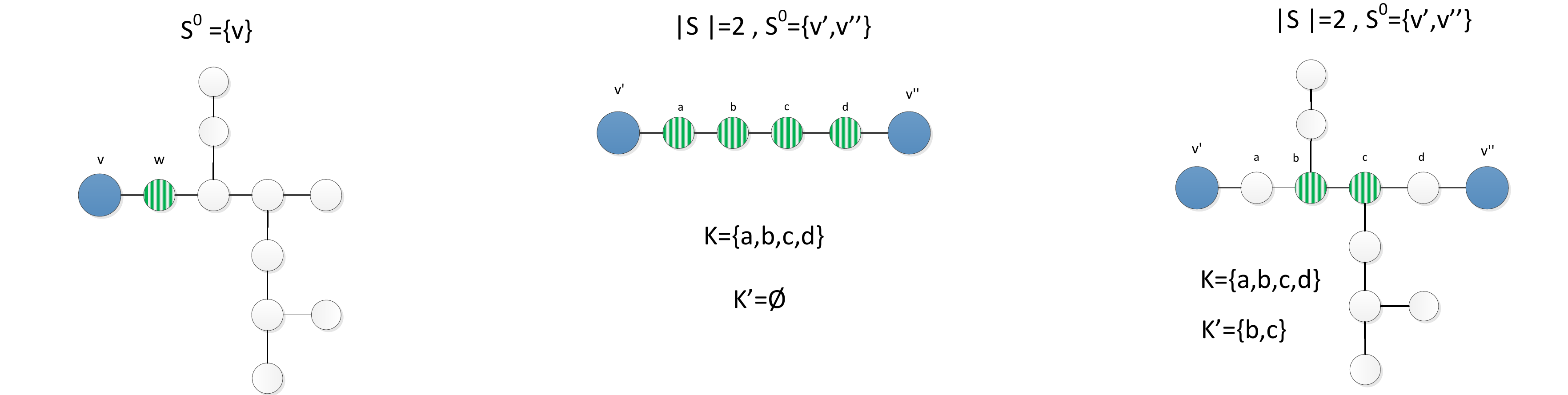}
\caption{Three different trees: blue nodes are stubborn and the ones with green stripes are the candidate solutions.}\label{fig:tree_cases}
\end{figure*}

\section{Message passing on trees}\label{sect:trees-mpa}

In this section, we design a message passing algorithm (MPA), which computes the HIC of every node of a tree in a distributed way.
We begin by outlining the structure of a general message passing algorithm on a tree. Preliminarily, define any node $i$ in the graph as the \emph{root}.
In the first phase, messages are passed inwards: starting at the leaves, each node passes a message along the unique edge towards the root node. The tree structure guarantees that it is possible to obtain messages from all other neighbor nodes before passing the message on. This process continues until the root $i$ has obtained messages from all its neighbors.
The second phase involves passing the messages back out: starting at the root, messages are passed in the reverse direction. The algorithm is completed when all leaves have received their messages.

Next, we show how this approach can be effective in our problem.
Take a generic root node $i \in I\backslash S^0$ and, for every $j\in N_i$, notice the following iterative structure of the subtree rooted in $i$ and not containing $j$:
$$I^{<ij}=\bigcup\limits_{k\in N_i\setminus\{j\}}I^{<ki}\cup \{i\}$$
This, together with relation (\ref{scaling ij}), yields
%
%
\begin{align}
\nonumber H^{<ij}(i)&=\sum\limits_{k\in N_i\setminus \{j\}}\sum_{\ell \in T^{<ki}}W^{(i)}(\ell)+1
\\
\label{iterate H} &=
\sum\limits_{k\in N_i\setminus \{j\}}W^{(i)}(k)H^{<ki}(k)+1.\end{align}
Forthermore, (\ref{scaling voltage}) yields
\be\label{voltages scale} W^{(i)}(k)=\frac{R^{\rm eff}_{<ki}}{R^{\rm eff}_{<ki}+R_{ik}}\ee
where we conventionally assume that $R^{\rm eff}_{<ki}=\infty$ and $W^{(i)}(k)=1$ if $S^0\cap I^{<ki}=\emptyset$.
On the other hand, also effective resistances $R^{\rm eff}_{<ki}$ admit an iterative representation. Indeed, replace $T^{<ij}$ with the equivalent circuit consisting of nodes: $S^0$, $i$, $j$, and all the nodes $k\in N_i\setminus\{j\}$. Between $S^0$ and $i$ there are $|N_i|-1$ parallel (length $2$) paths each passing through a different $k\in N_i\setminus\{j\}$ and having resistance $R^{\rm eff}_{<ki}+R_{ik}$ (see Figure~\ref{fig:iterative_treesvsd}).
\begin{figure*}
\centering
\includegraphics[scale=0.45]{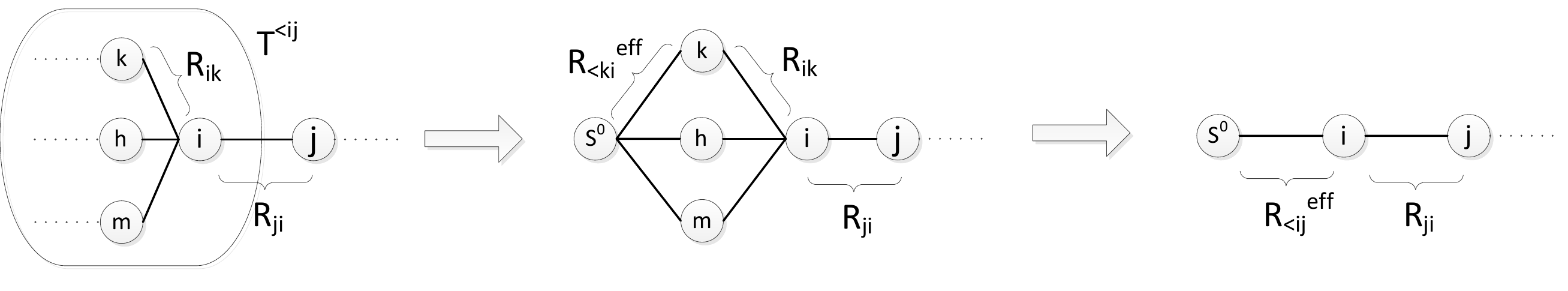}
\caption{Equivalent representation of parallel paths between $i$ and $S^0$.}\label{fig:iterative_treesvsd}
\end{figure*}
Therefore, using the parallel law for resistances we obtain
\be\label{iterate R} R^{\rm eff}_{<ij}=\left(\sum\limits_{k\in N_i\setminus \{j\}}\frac{1}{R^{\rm eff}_{<ki}+R_{ik}}\right)^{-1}\ee
The three relations (\ref{iterate H}), (\ref{voltages scale}),  and (\ref{iterate R}) determine an iterative algorithm to compute $H(i)$ at every node $i$ starting from leaves and propagating to the rest of the graph.
More precisely, define $H^{i\to j}:=H^{<ij}(i)$, and $W^{i\to j}:=W^{(i)}(j)$ to be thought as messages sent by node $i$ to node $j$ along the edge $\{i,j\}$.
From (\ref{iterate H}), (\ref{voltages scale}),  and (\ref{iterate R}), we easily obtain the following iterative relations
\be\label{algorithm1}\begin{array}{rcl} H^{i\to j}&=&\sum\limits_{k\in N_i\setminus \{j\}}W^{k\to i}H^{k\to i}+1\\
W^{i\to j}&=&\left(1+R_{ij}\sum\limits_{k\in N_i\setminus \{j\}}\displaystyle\frac{1-W^{k\to i}}{R_{ik}}\right)^{-1}\end{array}\ee
Notice that a node $i$ can only send messages to a neighbor $j$, once he has received messages $H^{k\to i}$ and $W^{k\to i}$ from all its neighbors $k$ but $j$. Iteration can start at leaves (having just one neighbor) with the following initialization step
\be\label{algorithm2}
\begin{array}{rcl} H^{i\to j}&=&\1_{i\not\in S^0}\\
W^{i\to j}&=&\1_{i\not\in S^0}\end{array}
\ee
where we denote by $\1_{i\not\in S^0}$ a vector indexed in $I$ which has entry 1 if $i\not\in S^0$ and entry 0 elsewhere.
Notice that each regular agent $i$ can finally compute $H(i)$ by the formula
\begin{equation*}\label{final compute} H(i)=\sum\limits_{k\in N_i}W^{k\to i}H^{k\to i}+1
\end{equation*}

Clearly this algorithm terminates in finite time, because as soon as a node $i$ has received all messages from its neighbors, it can compute the correct value of the Harmonic Influence Centrality.
It is then interesting to estimate the convergence time of the algorithm, in terms of number of steps, as well as the number of messages to be sent by the nodes and the number of operations to be performed.

\begin{prop}[Complexity of MPA]
Consider the message passing algorithm defined in equations \eqref{algorithm1} and \eqref{algorithm2} on tree $T$, and let $\dmax$ denote the largest degree of its nodes. Then,
\begin{enumerate}
\item the algorithm converges after at most $\diam(T)$ steps;
\item the number of messages (triples) to be sent by node~$i$ is $d_i$;
\item the number of messages (triples) to be sent in the whole network is not larger than $2 |I|$;
\item the number of operations to be performed by each node is $ O(d_i^2)$;
\item the number of operations for the whole network is $ O(\sum_{i \in I \backslash S^0}d_i^2)$.
\end{enumerate}
\end{prop}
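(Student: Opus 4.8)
The plan is to verify each of the five claims directly from the structure of the message-passing iteration \eqref{algorithm1}--\eqref{algorithm2}. The key observation underlying everything is that the message $H^{i\to j}$ (and $W^{i\to j}$) depends only on the subtree $T^{<ij}$, i.e.\ on data that reaches node~$i$ from the side opposite to~$j$; consequently a message can be emitted along $\{i,j\}$ precisely once all $d_i-1$ incoming messages from the other neighbors have arrived. I would make this scheduling picture precise first, since claims 1--3 are all corollaries of it.

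For claim~1, I would argue by induction on distance from the leaves. Fix an edge $\{i,j\}$ and orient it as $i\to j$; the message $H^{i\to j}$ becomes available after a number of steps equal to the depth of the subtree $T^{<ij}$ as seen from $i$, which is at most the eccentricity-type quantity $\max_{h\in I^{<ij}}\dst(h,i)$. Since every node $i$ computes its final $H(i)$ once it has received messages from \emph{all} neighbors, the last such completion happens after at most $\max_{i}\max_{h}\dst(h,i)=\diam(T)$ steps. (One should note the leaves start at step~0 via \eqref{algorithm2}, so the count is clean.) Claim~2 is immediate: node~$i$ sends exactly one triple along each of its $d_i$ incident edges, hence $d_i$ messages. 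Claim~3 then follows by summing: each edge $\{i,j\}$ carries exactly two messages, one in each direction, so the total is $2|E|=2(|I|-1)<2|I|$ for a tree; I would simply invoke $|E|=|I|-1$.

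For claims 4 and 5, I would count arithmetic operations per emitted message. Producing $H^{i\to j}$ and $W^{i\to j}$ via \eqref{algorithm1} requires summing over $N_i\setminus\{j\}$, i.e.\ $O(d_i)$ additions/multiplications/divisions; node~$i$ emits $d_i$ such messages, plus the final $O(d_i)$ computation of $H(i)$, giving $O(d_i^2)$ operations at node~$i$, which is claim~4. (A small refinement: the sums over $N_i\setminus\{j\}$ for the various $j$ can be obtained from a single sum over $N_i$ by subtracting one term, so even $O(d_i)$ total would be defensible, but $O(d_i^2)$ is the stated—and certainly valid—bound.) Summing over all regular nodes gives claim~5, $O(\sum_{i\in I\setminus S^0}d_i^2)$; here I would remark that nodes in $S^0$ send only the trivial initialization \eqref{algorithm2} and perform no real computation, which is why the sum ranges over $I\setminus S^0$.

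The only genuinely delicate point is claim~1: one must be careful that the diameter bound is tight and that the induction on subtree depth is set up correctly, accounting for the fact that an internal node may have to wait on its \emph{slowest} incoming branch before it can forward anything, so depths compose additively along a path rather than taking a max at each vertex. Everything else is bookkeeping, and I would keep it terse.
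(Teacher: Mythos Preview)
Your proposal is correct and follows essentially the same approach as the paper's own proof: both argue that the completion time at a node equals the distance to its farthest leaf (giving the diameter bound), count $d_i$ outgoing triples per node and sum via $\sum_i d_i = 2|E| = 2(|I|-1)$, and obtain the $O(d_i^2)$ bound from $d_i$ messages each costing $O(d_i)$ operations. Your write-up is in fact more detailed than the paper's (which is quite terse), and your parenthetical remark about the $O(d_i)$ refinement via the subtract-one-term trick is a nice observation that the paper does not mention.
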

\begin{proof}
The time before $H(i)$ converges is clearly equal to the distance between $i$ and the furthest leave; then, the first claim follows. It is immediate to observe that, if we count a triple as a message, the number of messages sent by each node is $d_i$. Then, across the whole network we have
$\sum_{i \in I} d_i=  2 |E|= 2 |I|- 2$ messages, because $T$ is a tree.
The number of operations for each node $i$ to compute a message for a neighbor $j$ is proportional to $d_i-1.$ Since node $i$ has to compute $d_i$ different messages, the computational cost for a node $i$ is of the order $O(d_i^2)$. The last claim follows by summing over all the network.
\end{proof}

A centralized algorithm to compute the HIC in any connected network was proposed in~\cite{YA11}. Being centralized, this algorithm requires full knowledge of the topology of the graph.
The computational complexity of the algorithm~\cite{YA11} is $O((|I|-|S^0|)^3).$ Since $\sum_{i \in I \backslash S^0}d_i^2 \le \card{I\setminus S^0}\, \dmax^{\,2}$,
on graphs with bounded degrees the MPA has a much smaller complexity $O(|I|-|S^0|)$ .
The drawback of MPA is its limitation to trees. In the next section, we will work towards removing this restrictive assumption.

\section{Message passing on general graphs}\label{sect:general-mpa}

Message passing algorithms are commonly designed on trees, but also implemented with some modification over general graphs. In many cases, the application is just empirical, without a proof of convergence.
We will see in this section how to apply the MPA to every graph, with suitable modifications in order to manage the new issues. Namely, we design an ``iterative'' version of the message passing algorithm of Section~\ref{sect:trees-mpa}, which can run on every network, regardless of the presence of cycles. We show that for regular graphs with unitary resistances, this algorithm converges (but not necessarily to the correct HIC values as we demonstrate next). We also present simulation results that  show the algorithm effectiveness in computing the HIC  on families of graphs with cycles.

We let the nodes send their messages at every time step, so that we denote them as
$$W^{i\to j}(t), \quad H^{i\to j}(t), \quad \text{for all } t\ge0.$$
The dynamics of messages are
%
\begin{subequations}
\begin{align}\label{G-algorithm1}
H^{i\to j}(t+1)&=\sum\limits_{k\in N_i\setminus \{j\}}\!\!W^{k\to i}(t)H^{k\to i}(t)+1
\\
W^{i\to j}(t+1)&=\left(1+R_{ij}\!\!\sum\limits_{k\in N_i\setminus \{j\}}\displaystyle\frac{1-W^{k\to i}(t)}{R_{ik}}\right)^{-1}
\end{align}
\end{subequations}
(where $R_{ij}=C_{ij}^{-1}$ are the edge resistances)
if $i \notin S^0$ and
\begin{subequations}
\begin{align}\label{G-algorithm10}
H^{i\to j}(t+1)&=0
\\
W^{i\to j}(t+1)&=0
\end{align}
\end{subequations}
otherwise.
The initialization is
\be\label{G-algorithm2}\begin{array}{rcl} H^{i\to j}(0)&=&\1_{i\not\in S^0}\\
W^{i\to j}(0)&=&\1_{i\not\in S^0}\end{array}\ee

By these definitions of messages we have defined the message passing algorithm for general graphs. Additionally, we should define a termination criterion: for instance, the algorithm may stop after a number of steps which is chosen {\it a priori}.
At every time $t$, each agent $i$ can compute an approximate $H(i)^{(t)}$ by the formula
\begin{equation*}\label{final compute} H(i)^{(t)}=\sum\limits_{k\in N_i}W^{k\to i}(t)H^{k\to i}(t)+1\end{equation*}

%
%
%
%
This new algorithm clearly converges to the HIC if the graph is a tree, and the convergence time is not larger than the diameter of the graph. Otherwise, the algorithm is not guaranteed to converge: furthermore, if the algorithm happens to converge, then the convergence value may be different from the HIC.

In order to illustrate the issues caused by the presence of cycles, we can use the so called \emph{computation trees}~\cite{W06}, which are constructed in the following way. Given a graph $G$, we focus on a `root' node, and for all $t \in \natural$ we let the nodes at distance $t$ from the root (the level $t$ of the tree) be the nodes whose messages reach the root after $t$ iterations of the message-passing algorithm.
Note that if the graph $G$ is a tree, the computation tree is just equal to $G$; otherwise, it has a number of nodes which diverges when $t$ goes to infinity. As an example, Figure~\ref{fig:computation_t} shows the first 4 levels of a sample computation tree.
In our MPA, each node $i$ is computing its own Harmonic Influence Centrality {\em in the computation tree instead than on the original graph}. As the number of levels of the computation tree diverges, the computation procedure may not converge, and --if converging-- may not converge to the harmonic influence in the original graph.
\begin{figure*}
\centering
\includegraphics[scale=0.35]{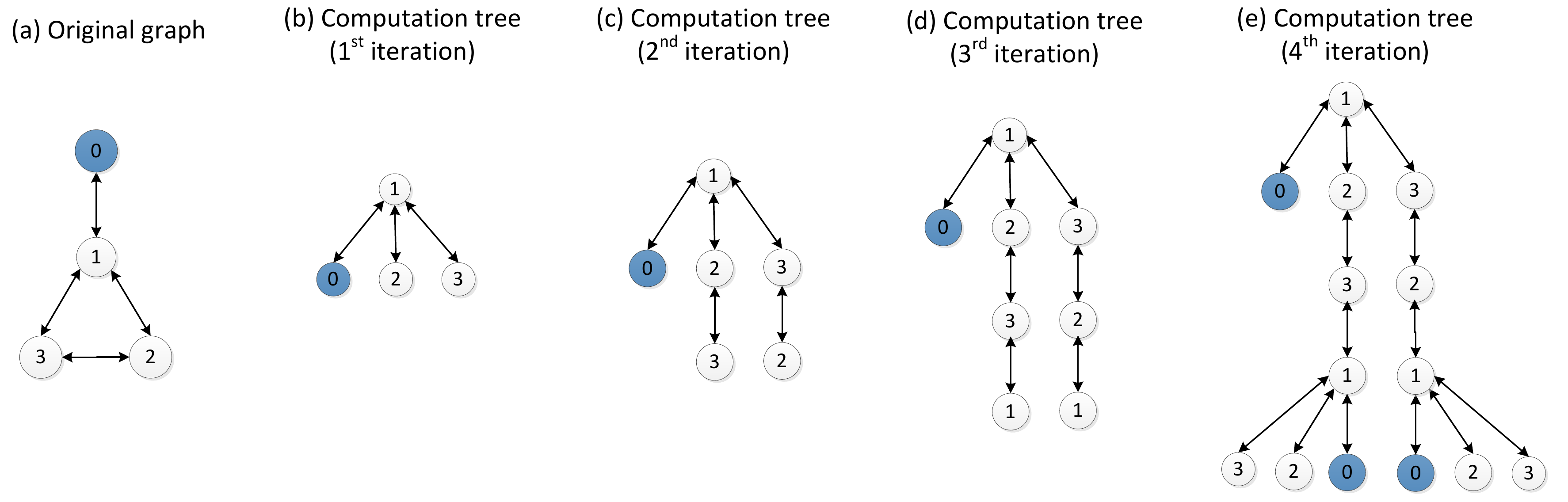}
\caption{A graph (a) and computation trees (b) of a MPA from root 1.}\label{fig:computation_t}
\end{figure*}

\subsection{MPA on regular graphs}\label{sect:mpa-regular}
This subsection is devoted to prove the following convergence result.

\begin{thm}[Convergence of MPA]\label{MPAcon}
Consider a connected graph $G=(I,E)$ with unitary resistances and $S^0\subseteq I$. Assume, moreover, that $d_i=d$ for all nodes $i\in I\setminus S^0$.
Then, the  MPA algorithm described by~\eqref{G-algorithm1}, \eqref{G-algorithm10}, and~\eqref{G-algorithm2} converges.
\end{thm}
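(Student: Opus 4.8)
The plan is to analyze the message dynamics \eqref{G-algorithm1}–\eqref{G-algorithm2} as a monotone (order-preserving) iteration on the $W$-variables, and then bootstrap convergence of the $H$-variables from convergence of the $W$-variables. First I would observe that, because all resistances are unitary and each regular node has degree $d$, the $W$-update decouples from the $H$-update and reads simply
\be\label{Wonly}
W^{i\to j}(t+1)=\left(1+\sum_{k\in N_i\setminus\{j\}}\bigl(1-W^{k\to i}(t)\bigr)\right)^{-1},
\ee
with $W^{i\to j}\equiv 0$ forced whenever $i\in S^0$. The key structural facts I would establish are: (i) the map in \eqref{Wonly} preserves the order on $[0,1]^{\vec E}$ in the sense that it is \emph{monotone decreasing} in each argument $W^{k\to i}$ (larger incoming $W$'s give a smaller outgoing $W$), so the two-step map $T^2$ (one full round of \eqref{Wonly} composed with itself) is monotone \emph{increasing}; and (ii) the values stay in the compact interval $[0,1]$ — indeed in $[1/(1+d-1),1]=[1/d,1]$ for regular non-stubborn nodes that are not adjacent to $S^0$, and the forced-zero entries are fixed. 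Then I would run the classical sandwich argument: start the even subsequence from the all-ones initialization (which is the top of the order) and from its image, obtaining a monotone decreasing sequence from above and a monotone increasing sequence from below under $T^2$; both converge by monotone convergence in $\R^{\vec E}$, to fixed points $\overline W\ge \underline W$ of $T^2$.

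The main obstacle is to show $\overline W=\underline W$, i.e. that $T^2$ has a \emph{unique} fixed point sandwiched between these bounds, equivalently that $T$ itself has a unique fixed point and the even and odd subsequences cannot split. Here I would exploit the electrical interpretation that underlies the whole paper: a fixed point $W^\star$ of \eqref{Wonly} is exactly the collection of voltage ratios $W^{i\to j}=R^{\rm eff}_{<ij}/(R^{\rm eff}_{<ij}+1)$ on the \emph{computation tree} rooted appropriately, and on that infinite $d$-regular-type tree the effective resistance from $i$ (through the branch away from $j$) to the collapsed $S^0$ is well-defined and finite because the tree, having constant branching with a positive density of grounded leaves relative to the recursion \eqref{iterate R}, yields a contraction: writing $r^{i\to j}=1/W^{i\to j}-1=R^{\rm eff}_{<ij}$, the recursion becomes $r^{i\to j}=\bigl(\sum_{k\in N_i\setminus\{j\}}1/(r^{k\to i}+1)\bigr)^{-1}$, and on $[0,\infty]$ this update is a contraction in a suitable metric (e.g. the Thompson/Hilbert projective metric, or directly one checks $|{\partial r^{i\to j}}/{\partial r^{k\to i}}|<1$ uniformly when $d\ge 2$), forcing $\overline W=\underline W=:W^\star$. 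This is the step I expect to require the most care, since one must rule out the degenerate possibility $R^{\rm eff}=\infty$ propagating and must handle the nodes adjacent to $S^0$ as the source of the contraction.

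Once $W^{i\to j}(t)\to W^{\star\,i\to j}$ is established, I would close the argument by feeding this into the $H$-update. With the coefficients $W^{k\to i}(t)$ converging, \eqref{G-algorithm1} becomes an asymptotically autonomous linear recursion
\be
H^{i\to j}(t+1)=\sum_{k\in N_i\setminus\{j\}}W^{\star\,k\to i}H^{k\to i}(t)+1+o(1),
\ee
whose homogeneous part is governed by the nonnegative matrix $M$ with entries $W^{\star\,k\to i}$ indexed by directed edges. The point is that $M$ has spectral radius strictly less than $1$: this is precisely the statement that the effective resistances on the computation tree are finite, equivalently that $\sum_n M^n$ converges, which is the same substochastic-stability phenomenon invoked after \eqref{fixed point} for $Q^{11}$ — the computation tree always contains, along every directed path, the "escape" to a grounded node that makes the product of the $W^{\star}$'s decay. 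Hence the affine iteration has a unique fixed point $H^\star=(I-M)^{-1}\1$ and, by standard results on asymptotically autonomous linear iterations with a Schur-stable limit, $H^{i\to j}(t)\to H^{\star\,i\to j}$. Therefore all messages converge, and with them $H(i)^{(t)}$, which proves the theorem. I would remark explicitly (as the paper already warns) that the limit need not equal the true HIC, because $M$ and $W^\star$ encode the computation tree, not $G$.
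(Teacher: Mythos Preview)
Your plan has a sign error that cascades into an unnecessary and incomplete detour. The update
\[
W^{i\to j}(t+1)=\Bigl(1+\sum_{k\in N_i\setminus\{j\}}(1-W^{k\to i}(t))\Bigr)^{-1}
\]
is monotone \emph{increasing} in each $W^{k\to i}$, not decreasing: a larger incoming $W$ shrinks the sum and enlarges the reciprocal. With the correct direction the paper's argument is one line: from the initialization $W(0)=\1$ one has $W(1)\le W(0)$ (stubborn-outgoing coordinates drop to $0$), and since $T$ is order-preserving this propagates inductively to $W(t+1)\le W(t)$ for all $t$. The whole sequence is thus monotone nonincreasing in $[0,1]^{\vec E}$ and converges. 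There is no need for the $T^2$ sandwich, no even/odd splitting, and crucially no uniqueness-of-fixed-point step at all. Your proposed uniqueness argument (Hilbert metric, or the bound $|\partial r^{i\to j}/\partial r^{k\to i}|<1$) is exactly the part you flagged as delicate, and it is not obviously uniform; in particular the Jacobian row sums $\sum_k (r^{i\to j})^2/(r^{k\to i}+1)^2=\sum_k a_k^2/(\sum_k a_k)^2$ are only $\le 1$ in general, so a naive $\ell^\infty$-contraction does not follow. The paper simply sidesteps this.

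For the $H$-messages your outline is close to the paper's, but again the paper replaces your appeal to ``finite effective resistance on the computation tree'' with a concrete inequality: from the fixed-point relations it shows directly (by a short maximum-type argument using connectivity and the presence of $S^0$) that $\sum_{k\in N_i\setminus\{j\}}W^{k\to i}<1$ for every directed edge, hence the limiting coefficient matrix satisfies $\|M\|_\infty<1$. That yields convergence of the asymptotically autonomous affine recursion for $H$ by elementary means, without invoking spectral radius or infinite-tree heuristics.
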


We start analyzing the behavior of the $W$ variables which is independent from the $H$ variables. Notice that under the assumptions of Theorem~\ref{MPAcon} the second relations in \eqref{G-algorithm1}-\eqref{G-algorithm10} simplifies to
\begin{equation}\label{G-algorithm-reg} W^{i\to j}(t+1)=\left\{\begin{array}{cc}\left(d-\sum\limits_{k\in N_i\setminus \{j\}} W^{k\to i}(t) \right)^{-1}
\;\;&{\rm if}\, i\not\in S^0\\ 0\;\;&{\rm otherwise}\end{array}\right.
\end{equation}

\begin{lem}\label{W-con}
Under the assumptions of Theorem~\ref{MPAcon}, there exist numbers $W^{i\to j}\in [0,1[$  satisfying, for all $\{i,j\}\in E$, the fixed point relations
\begin{equation}\label{fixed point2} W^{i\to j}=\left\{\begin{array}{ll}\left(d-\sum\limits_{k\in N_i\setminus \{j\}} W^{k\to i} \right)^{-1}
\;\;&{\rm if}\, i\not\in S^0\\ 0\;\;&{\rm otherwise}\end{array}\right.
\end{equation}
and such that $W^{i\to j}(t)\to W^{i\to j}$ as $t\to +\infty$ for all $\{i,j\}\in E$.
\end{lem}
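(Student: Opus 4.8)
The key observation is that the $W$-recursion \eqref{G-algorithm-reg} is a \emph{monotone} iteration: if we increase any of the inputs $W^{k\to i}(t)$ (all of which appear with a positive sign inside the parenthesis after the subtraction from $d$), the denominator $d-\sum_{k} W^{k\to i}(t)$ \emph{decreases}, hence the output $W^{i\to j}(t+1)$ \emph{increases}. So the whole map $F:\,W(t)\mapsto W(t+1)$ on the vector of messages (indexed by directed edges $i\to j$) is order-preserving with respect to the componentwise partial order, on the region where all denominators stay positive. The plan is the standard one for monotone dynamical systems: exhibit an invariant order interval, show the iteration starting from the top (or bottom) of that interval is monotone, and conclude convergence by monotone-plus-bounded.

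First I would set up the domain. Because $d\ge 2$ for every non-stubborn node (a node of degree $\le1$ whose only neighbour is removed would be isolated; more simply $G$ connected with $|I|\ge2$ forces $d\ge1$, and if $d=1$ the non-stubborn nodes form a trivial case — I would either assume $d\ge2$ or handle $d=1$ separately where every message is trivially $1$ or $0$), one checks that the map preserves the box $[0,1]^{E_{\mathrm{dir}}}$: indeed if all $W^{k\to i}(t)\in[0,1]$ then $\sum_{k\in N_i\setminus\{j\}}W^{k\to i}(t)\le d_i-1=d-1$, so the denominator is $\ge d-(d-1)=1>0$ and the new value lies in $(0,1]$; stubborn nodes output $0$. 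So $[0,1]^{E_{\mathrm{dir}}}$ is forward-invariant and the iterates are well-defined and bounded for all $t$.

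Next, the monotonicity argument. The initialization is $W^{i\to j}(0)=\1_{i\notin S^0}$, i.e.\ the \emph{componentwise maximum} of the invariant box (each coordinate is $1$ or $0$, and the $0$ coordinates — messages out of stubborn nodes — are frozen at $0$ forever). Since $F$ is order-preserving and $W(1)\le W(0)$ coordinatewise (as $W(1)\in[0,1]^{E_{\mathrm{dir}}}$ and the frozen coordinates are equal), applying $F$ repeatedly gives $W(t+1)=F(W(t))\le F(W(t-1))=W(t)$ by induction. Thus each coordinate $W^{i\to j}(t)$ is nonincreasing in $t$ and bounded below by $0$, hence converges to some limit $W^{i\to j}\in[0,1]$. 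Passing to the limit in \eqref{G-algorithm-reg} — legitimate because the right-hand side is a continuous function of the finitely many arguments as long as the limiting denominator is nonzero, which I verify below — yields the fixed-point relations \eqref{fixed point2}. Finally I must argue $W^{i\to j}<1$ strictly, i.e.\ the limit lies in $[0,1[$ not just $[0,1]$: for a non-stubborn $i$, $W^{i\to j}=1$ would force $\sum_{k\in N_i\setminus\{j\}}W^{k\to i}=d-1$, i.e.\ every one of the $d-1$ incoming messages equals $1$; chasing this back along the connected graph would force the message coming out of some stubborn node to be $1$, contradicting that it is $0$. (Concretely: pick $i$ with $W^{i\to j}=1$ maximizing distance to $S^0$; one of its predecessors $k$ has $W^{k\to i}=1$ and is strictly closer to $S^0$, iterate until reaching a stubborn node.) This also shows all limiting denominators are $\ge d-(d-1)>0$ so the continuity step above was valid.

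The main obstacle I anticipate is not the monotone-convergence skeleton, which is routine, but (i) pinning down that the iteration genuinely starts \emph{at the top} of an invariant interval — one must be careful that the frozen zero-coordinates do not break the order comparison $W(1)\le W(0)$, which is why the invariant domain should really be taken as $\{W\in[0,1]^{E_{\mathrm{dir}}}: W^{i\to j}=0 \text{ for } i\in S^0\}$ rather than the full box — and (ii) the strictness claim $W^{i\to j}<1$, which needs the connectivity-plus-stubbornness path-chasing argument and is exactly what rules out a degenerate fixed point with a vanishing denominator. I would present (i) as a one-line invariance check and (ii) as the short extremal-node argument sketched above.
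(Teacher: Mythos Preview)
Your proposal is correct and follows essentially the same route as the paper's own proof: establish that the map is order-preserving on an invariant box so that the iteration, initialized at the top, is nonincreasing and bounded, hence convergent to a fixed point; then exclude $W^{i\to j}=1$ by chasing the implication $W^{i\to j}=1\Rightarrow W^{k\to i}=1\ \forall\,k\in N_i\setminus\{j\}$ back through the connected graph until it hits a stubborn node. Your write-up is a bit more explicit about the invariant domain and the continuity needed to pass to the limit, but the skeleton is identical to the paper's.
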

\begin{proof}
We already know that the sequence $W^{i\to j}(t)$ is bounded, $0\leq W^{i\to j}(t) \leq 1$  for all $\{i,j\} \in E$ and for all $t \geq 0$.
Notice now that $W^{i\to j}(1)\leq W^{i\to j}(0)$ for all $\{i,j\}\in E$. On the other hand, it is immediate to check, from expression (\ref{G-algorithm-reg}), that the following inductive step holds true:
$$W^{i\to j}(t)\leq W^{i\to j}(t-1)\;\Rightarrow\;  W^{i\to j}(t+1)\leq W^{i\to j}(t)$$
This implies that $W^{i\to j}(t)$ is a decreasing sequence for all $\{i,j\}\in E$: we thus get convergence to a limit satisfying (\ref{fixed point2}).  Finally, to show that $W^{i\to j}<1$ for all $\{i,j\}$, we notice that
$$W^{i\to j}=1\;\Rightarrow\; W^{k\to i}=1\;\forall k\in N_i\setminus\{j\}$$
Iterating this, and using the connectivity of the graph we obtain the absurd statement that $W^{h\to \ell}=1$ for $h\in S^0$ and some $\ell\in N_h$.
\end{proof}

We can say more on the limit numbers $W^{i\to j}$s.

\begin{lem}\label{W-con2} Under the assumptions of Theorem~\ref{MPAcon}, the numbers $W^{i\to j}$s defined in Lemma~\ref{W-con} satisfy the bounds
$$\sum\limits_{k\in N_i\setminus \{j\}} W^{k\to i} <1\:\forall i\in I,\; \forall j\in N_i$$
\end{lem}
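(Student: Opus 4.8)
The plan is to leverage the fixed point relation \eqref{fixed point2} obtained in Lemma~\ref{W-con} together with the fact, already established there, that $W^{i\to j} \in [0,1[$ strictly. First I would rewrite \eqref{fixed point2} for a non-stubborn node $i$ as
\[
d - \sum_{k\in N_i\setminus\{j\}} W^{k\to i} = \frac{1}{W^{i\to j}} > 1,
\]
so that immediately $\sum_{k\in N_i\setminus\{j\}} W^{k\to i} < d-1$. This is the ``right'' inequality but off by a factor: it gives the bound $d-1$, not $1$. So the real work is to close the gap between $d-1$ and $1$, which must come from using the relation at \emph{all} neighbors simultaneously, not just at one node.

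The key idea I would pursue is a global/extremal argument. Consider the maximum value $M := \max_{\{i,j\}\in E} W^{i\to j}$, attained on some directed edge, say $W^{p\to q} = M$ with $p\notin S^0$ (if the max is attained only on edges out of $S^0$ then $M=0$ and everything is trivial). From the fixed point relation at $p$,
\[
M = W^{p\to q} = \Bigl(d - \sum_{k\in N_p\setminus\{q\}} W^{k\to p}\Bigr)^{-1},
\]
and since each $W^{k\to p}\le M$ and $|N_p\setminus\{q\}| = d-1$ (for $p\notin S^0$), we get $M \ge (d-(d-1)M)^{-1}$, i.e. $M(d-(d-1)M)\ge 1$, i.e. $(d-1)M^2 - dM + 1 \le 0$. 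Factoring, $((d-1)M-1)(M-1)\le 0$, and since $M<1$ by Lemma~\ref{W-con}, we must have $(d-1)M - 1 \ge 0$... which unfortunately goes the wrong way. So a naive use of the global maximum does not directly work either; I would instead need to track \emph{which} neighbors actually achieve large values, i.e. exploit that a node cannot have all $d-1$ incoming messages simultaneously close to $M$ unless propagating the constraint back toward $S^0$ forces a contradiction — essentially an iterated version of the argument used at the end of the proof of Lemma~\ref{W-con}.

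Concretely, the cleanest route I would take is an inductive/monotone comparison with the tree case. Define auxiliary quantities $\widetilde{W}^{i\to j}$ on the computation tree rooted appropriately (which is a genuine tree with all internal degrees $d$ and leaves in the copies of $S^0$); on that tree the messages satisfy exactly the tree recursion \eqref{algorithm1}, for which the desired bound $\sum_{k\in N_i\setminus\{j\}} W^{k\to i} < 1$ is automatic because each $W^{i\to j}$ there equals $\frac{R^{\rm eff}_{<ij}}{R^{\rm eff}_{<ij}+R_{ij}}$ by \eqref{voltages scale}, and effective resistances are positive, giving $1 - W^{i\to j} = \frac{R_{ij}}{R^{\rm eff}_{<ij}+R_{ij}} > 0$ hence $\sum_{k}(1-W^{k\to i})/R_{ik}$ is a positive quantity that is precisely what appears in the denominator of $W^{i\to j}$; unwinding, $\sum_{k\in N_i\setminus\{j\}} W^{k\to i} = d - 1/W^{i\to j} \cdot(\text{stuff})$... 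I would make this precise by showing $1 - W^{i\to j} = R_{ij} \sum_{k\in N_i\setminus\{j\}} \frac{1-W^{k\to i}}{R_{ik}}$ holds for the limit messages (take $t\to\infty$ in the second relation of \eqref{G-algorithm1}), and then, since the left side is positive (as $W^{i\to j}<1$), the sum $\sum_{k\in N_i\setminus\{j\}}(1-W^{k\to i})$ is positive for unitary resistances, i.e. $\sum_{k\in N_i\setminus\{j\}} W^{k\to i} < d-1$ — and then strengthen by noting that if the sum were $\ge 1$ one could again propagate, via $W^{k\to i} \ge$ the value forced, back along a path to $S^0$ and reach the contradiction $W^{h\to\ell}\ge 1$ for $h\in S^0$.

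The main obstacle, as the discussion above shows, is exactly upgrading the easy bound $d-1$ to the sharp bound $1$. I expect the resolution is the monotonicity/comparison with the computation tree: the limit messages $W^{i\to j}$ of the loopy algorithm coincide with (or are dominated by) the tree-recursion messages on the computation tree, where \eqref{voltages scale} gives each message as a ratio $R^{\rm eff}/(R^{\rm eff}+R)<1$ of positive resistances, and the claimed inequality $\sum_{k\in N_i\setminus\{j\}} W^{k\to i} < 1$ is then a direct algebraic consequence of the parallel-law identity \eqref{iterate R} together with \eqref{voltages scale}. I would therefore structure the proof as: (i) pass to the limit in the $W$-recursion to get the identity $1-W^{i\to j} = \sum_{k\in N_i\setminus\{j\}}(1-W^{k\to i})$ for unitary resistances; (ii) observe this rearranges to $\sum_{k\in N_i\setminus\{j\}} W^{k\to i} = d-1-(1-W^{i\to j}) + (\text{correction})$ — on closer inspection the identity from \eqref{G-algorithm-reg} is $d - \sum_{k\in N_i\setminus\{j\}} W^{k\to i} = 1/W^{i\to j}$, so $\sum_{k\in N_i\setminus\{j\}} W^{k\to i} = d - 1/W^{i\to j}$; (iii) it remains to show $1/W^{i\to j} > d-1$, i.e. $W^{i\to j} < 1/(d-1)$, which is the genuinely nontrivial inequality and which I would prove by the extremal-path argument, taking the directed edge maximizing $W^{i\to j}$ among non-stubborn tails and tracing the chain of forced inequalities back to $S^0$ to contradict $W^{h\to\ell}=0$ there unless $W^{i\to j} < 1/(d-1)$ throughout.
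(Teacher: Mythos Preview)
Your instinct to take a maximum and propagate toward $S^0$ is exactly right, and it is essentially what the paper does. The problem is a sign error in your first extremal argument, which made you abandon the correct route and wander through computation trees unnecessarily.

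Specifically: with $M=\max_{\{i,j\}} W^{i\to j}$ attained at $p\to q$ with $p\notin S^0$, you have
\[
M=\Bigl(d-\sum_{k\in N_p\setminus\{q\}}W^{k\to p}\Bigr)^{-1}
\quad\text{and}\quad
\sum_{k\in N_p\setminus\{q\}}W^{k\to p}\le (d-1)M,
\]
so $d-\sum\ge d-(d-1)M>0$ and hence $M\le\bigl(d-(d-1)M\bigr)^{-1}$, not $\ge$. This gives $(d-1)M^2-dM+1\ge 0$, i.e.\ $\bigl((d-1)M-1\bigr)(M-1)\ge 0$; since $M<1$ by Lemma~\ref{W-con}, you conclude $M\le 1/(d-1)$. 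That is precisely the inequality you later identify in step~(iii) as ``the genuinely nontrivial'' one. It then immediately yields $\sum_{k\in N_i\setminus\{j\}}W^{k\to i}\le (d-1)M\le 1$, and the equality case $M=1/(d-1)$ is ruled out by the propagation argument you sketch: equality at $p\to q$ forces $W^{k\to p}=M$ for every $k\in N_p\setminus\{q\}$, hence $k\notin S^0$, and iterating contradicts connectivity to $S^0$.

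The paper's proof is the same idea with a different parametrization: it sets $\alpha:=\max_{\{i,j\}}\tilde W^{i\to j}$ where $\tilde W^{i\to j}:=\sum_{k\in N_i\setminus\{j\}}W^{k\to i}$, derives the same quadratic $\alpha(d-\alpha)\le d-1$ (so $\alpha\le 1$ since $\alpha<d-1$), and then excludes $\alpha=1$ by the identical propagation-to-$S^0$ contradiction. Your version (max of $W$) and the paper's (max of $\tilde W$) are equivalent; the computation-tree comparison is not needed.
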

\begin{proof} For every $\{i,j\}\in E$, define $\tilde W^{i\to j}=\sum_{k\in N_i\setminus \{j\}} W^{k\to i} $. From (\ref{fixed point2}), we can easily obtain the iterative relation
\begin{equation*}\label{fixed point3} \tilde W^{i\to j}=\sum\limits_{k\in (N_i\setminus\{j\})\cap (S^0)^c}\left(d-\sum\limits_{k\in N_i\setminus \{j\}} \tilde W^{k\to i} \right)^{-1}\end{equation*}
Suppose that
$$\alpha =\tilde W^{i\to j}=\max\{\tilde W^{h\to k}\,|\, \{h,k\}\in E,\; h\not\in S^0\}$$
Clearly, by Lemma~\ref{W-con} we have $\alpha\in [0, d-1[$ and we easily obtain the relation
\be\label{constraint}\alpha\leq \frac{|(N_i\setminus\{j\})\cap (S^0)^c|}{d-\alpha}\leq \frac{d-1}{d-\alpha}\ee
which yields $\alpha \in [0,1]$. Suppose by contradiction that $\alpha =1$ and notice that  inequalities in (\ref{constraint}) would yield
$$\tilde W^{i\to j}=1\;\Rightarrow\; (N_i\setminus\{j\})\cap S^0=\emptyset$$ and $$ \tilde W^{k\to i} =1\,\forall k\in N_i\setminus\{j\}.$$
Iterating this, we easily obtain the result that there is no path from nodes in $S^0$ to $i$ and this contradicts the fact that $G$ is connected.
\end{proof}

Finally, we analyze the behavior of the sequences  $H^{i\to j}(t)$.
\begin{prop}\label{H-con}
Under the assumptions of Theorem~\ref{MPAcon}, there exist numbers $H^{i\to j}\in [0,1[$ satisfying, for all $\{i,j\}\in E$, the fixed point relations
\begin{equation*}\label{fixed point3} H^{i\to j}=\left\{\begin{array}{ll}\sum\limits_{k\in N_i\setminus \{j\}}W^{k\to i}H^{k\to i}+1
\;\;&{\rm if}\, i\not\in S^0\\ 0\;\;&{\rm otherwise}\end{array}\right.\end{equation*}
and such that $H^{i\to j}(t)\to H^{i\to j}$ as $t\to +\infty$ for all $\{i,j\}\in E$.
\end{prop}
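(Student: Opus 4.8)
The plan is to use that, once the $W$-messages have been analysed (Lemmas~\ref{W-con} and~\ref{W-con2}), the recursion~\eqref{G-algorithm1} for the $H$-messages becomes \emph{affine} in the $H$-variables, with time-varying but convergent coefficients. Collect the messages $H^{i\to j}(t)$ over the finitely many directed edges $(i\to j)$ with $i\notin S^0$ into a vector $z(t)$; by~\eqref{G-algorithm10} and~\eqref{G-algorithm2}, $H^{i\to j}(t)\equiv 0$ whenever $i\in S^0$, so those entries play no role. The dynamics then reads $z(t+1)=M(t)\,z(t)+\1$, where $M(t)$ is the nonnegative matrix with $M(t)_{(i\to j),(k\to i)}=W^{k\to i}(t)$ for $k\in N_i\setminus\{j\}$ and zero otherwise, and $M(t)\to M$ entrywise, $M$ being built from the limits $W^{k\to i}$ of Lemma~\ref{W-con}.

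The first step is to show that the \emph{limiting} affine map $z\mapsto Mz+\1$ is a contraction in the supremum norm. By Lemma~\ref{W-con2}, for each directed edge $(i\to j)$ with $i\notin S^0$ the corresponding row sum of $M$ is $\sum_{k\in N_i\setminus\{j\}}W^{k\to i}<1$; since the edge set is finite, $\rho:=\|M\|_\infty<1$. Hence $I-M$ is invertible, the map $z\mapsto Mz+\1$ has the unique fixed point $z^\star=(I-M)^{-1}\1=\sum_{n\ge0}M^n\1$, and $\0\le z^\star$ with $\|z^\star\|_\infty\le(1-\rho)^{-1}<\infty$. Together with the convention $H^{i\to j}=0$ for $i\in S^0$, the entries of $z^\star$ are precisely the numbers $H^{i\to j}$ asserted to satisfy the fixed-point relations.

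It remains to prove $z(t)\to z^\star$ in spite of the time-dependence of $M(t)$. Since $M(t)\to M$ entrywise and each row sum of the limit $M$ is at most $\rho<1$, there is $T$ such that $\|M(t)\|_\infty\le\rho':=\tfrac{1+\rho}{2}<1$ for all $t\ge T$. For $t\ge T$, the identity $z(t+1)-z^\star=M(t)\bigl(z(t)-z^\star\bigr)+\bigl(M(t)-M\bigr)z^\star$ gives $\|z(t+1)-z^\star\|_\infty\le\rho'\,\|z(t)-z^\star\|_\infty+\delta(t)$ with $\delta(t):=\|M(t)-M\|_\infty\,\|z^\star\|_\infty\to0$. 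An elementary estimate for scalar recursions of the form $a_{t+1}\le\rho'a_t+\delta_t$ with $\rho'<1$ and $\delta_t\to0$ (first $\limsup_t a_t\le(1-\rho')^{-1}\sup_s\delta_s<\infty$, then $\limsup_t a_t\le\rho'\limsup_t a_t$, hence $a_t\to0$), applied to $a_t=\|z(t)-z^\star\|_\infty$, yields $z(t)\to z^\star$; over the initial window $t<T$ the vector $z(t)$ is just finitely many iterations of a recursion with bounded coefficients, hence finite, which is all the estimate needs at its starting point.

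The essential point---and the reason the argument cannot simply copy the monotone reasoning of Lemma~\ref{W-con}---is that the $H$-recursion is \emph{not} monotone: the decreasing $W^{k\to i}(t)$ push $H^{i\to j}(t)$ down while the additive $+1$ and the growth of the underlying computation tree push it up. What makes the proof work is the \emph{strict} inequality of Lemma~\ref{W-con2}, which upgrades the limiting iteration to a genuine supremum-norm contraction with a finite fixed point; the residual non-autonomous part is then a vanishing perturbation, controlled by the elementary estimate above. The only mildly delicate points are extracting a contraction factor $\rho<1$ that is uniform over edges (immediate, as the edge set is finite) and keeping $z(t)$ finite for $t<T$ (trivial).
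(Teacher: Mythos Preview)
Your proof is correct and follows essentially the same approach as the paper: both vectorize the $H$-messages, rewrite the recursion as an affine iteration $z(t+1)=M(t)z(t)+\1$ with $M(t)\to M$ and $\|M\|_\infty<1$ (the latter from Lemma~\ref{W-con2}), and conclude convergence to the unique fixed point. The paper dismisses the last step as ``straightforward calculus considerations,'' whereas you actually carry it out via the perturbed contraction estimate $\|z(t+1)-z^\star\|_\infty\le\rho'\|z(t)-z^\star\|_\infty+\delta(t)$---so your write-up is, if anything, more complete than the original.
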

\begin{proof}
It is convenient to gather the sequences $H^{i\to j}(t)$ into a vector sequence $H(t)\in\R^E$ and rewrite the iterative relation contained in (\ref{G-algorithm1}) as
$$H(t+1)=W(t)H(t)+\1_{(S^0)^c}$$
where $W(t)\in \R^{E\times E} $ is given by
$$W(t)_{i\to j, h\to k}:=\left\{\begin{array}{ll} W^{h\to k}(t)\quad &{\rm if}\; k=i\not\in S^0,\, h\neq j\\ 0\quad  &{\rm otherwise}\end{array}\right.$$
We know from Lemmas~\ref{W-con} and~\ref{W-con2} that $W(t)$ converges to a matrix $W\in \R^{E\times E}$ with non-negative elements and satisfying the row relations
$$\sum\limits_{\{h, k\}\in E}W_{i\to j, h\to k}=\left\{\begin{array}{ll}\sum\limits_{h\in N_i\setminus\{j\}}W^{h\to i}\quad &{\rm if}\; \not\in S^0\\  0\quad &{\rm otherwise}\end{array}\right.
$$
Notice that $W$ is an asymptotically stable sub-stochastic matrix such that
$$||W||_{\infty}=\max\{\sum\limits_{\{h, k\}\in E}W_{i\to j, h\to k}\,|\, \{i,j\}\in E\}<1$$ Straightforward calculus considerations then yield convergence of $H(t)$.
\end{proof}


\subsection{Simulations}\label{sect:simulations}

We have performed extensive simulations of our MP algorithm on well-known families of random graphs such as Erd\H{o}s-R\'enyi and Watts-Strogatz, obtaining very encouraging results.
First, the algorithm is convergent in every test. Second, in many cases the computed values of HIC are very close to the correct values, which we can obtain by the benchmark algorithm in~\cite{YA11}.
In order to make this claim more precise, we define two notions of error. We denote by $H(i)$ the correct Harmonic Influence Centrality of node $i$ and by $\widehat{H}^{(t)}(i)$ the output of the algorithm in node $i$ after $t$ steps.
We define the \emph{mean deviation error} at time step $t$ as:
$$\subscr{e}{dev}(t)=\frac{\sum_{i \in I} |H(i)-\widehat{H}^{(t)}(i)|}{|I|}.$$
Additionally,  as we are interested in the optimal stubborn agent placement problem, we are specially concerned about obtaining the right ranking of the nodes, in terms of HIC. We thus define the \emph{mean rank error} at time step $t$ as
$$\subscr{e}{rank}(t)=\frac{\sum_{i \in I} |\textup{rank}_{H}(i)-\textup{rank}_{\widehat{H}^{(t)}}(i)|}{|I|},$$
where for a function $\map{f}{I}{\real}$, we denote by $\textup{rank}_{f}(i)$ the position of $i$ in the vector of nodes, sorted according to the values of $f$.

We now move on to describe some simulation results in more detail.
As the stopping criterion, we ask that the mean difference between the output of two consecutive steps is below a threshold, chosen as $10^{-5}$.
As the topology of the graphs, we choose random graphs generated according to the well-known Erd\H{o}s-R\'enyi model: $\mathcal{G}(n, p)$ is a graph with $n$ nodes such that every pair of nodes is independently included in the edge set with probability $p$ (for further details see~\cite{AB02}). 

First, we consider an example of Erd\H{o}s-R\'enyi random graph with $n=15$ and $p=0.2$; nodes 1, 2 and 3 are stubborn in $S^0$. In spite of the presence of several cycles in the sampled graph, the algorithm finds the maximum of the HIC correctly; see Figure~\ref{fig:erdosA}. Figure~\ref{fig:erdosB} plots the mean deviation error and the mean ranking error as functions of time steps in the same experiment: after just 4 steps the ranking error reaches a minimum value. Note that although the obtained ranking is not entirely correct, the three nodes with highest HIC are identified and the HIC profile is well approximated.
\begin{figure}
\centering
\includegraphics[scale=0.4]{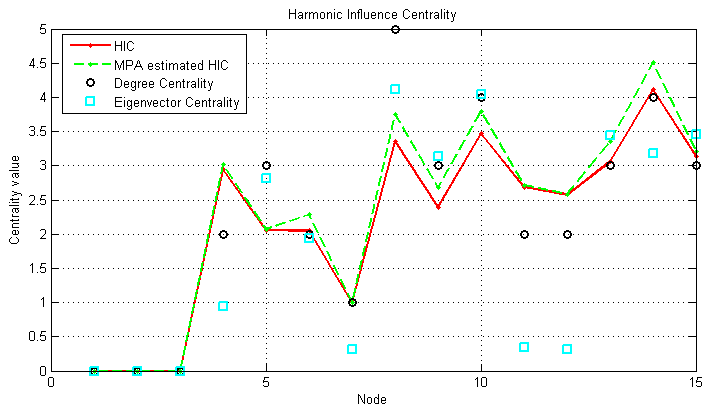}
\caption{Comparison between the actual values of $H$ in the nodes of an Erd\H{o}s-R\'enyi graph with $15$ nodes and the values estimated by the MPA. Degree and eigenvector centralities are also shown. }
\label{fig:erdosA}
\end{figure}
\begin{figure}
\centering
\includegraphics[scale=0.4]{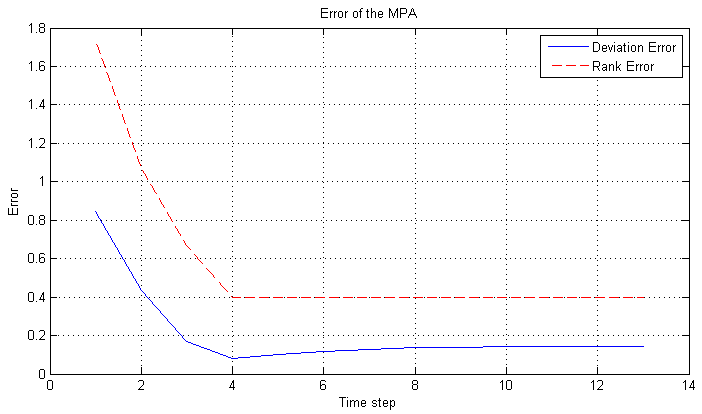}
\caption{Mean deviation error and mean ranking error of the MPA as functions of time steps on the same graph as in Fig.~\ref{fig:erdosA}. The stopping condition is reached after 13 steps.}
\label{fig:erdosB}
\end{figure}
The true HIC is smaller than the approximated one and the deviation error is localized on some node. These facts, which can be widely observed in our simulations, can be explained by thinking of the computation trees as in Section~\ref{sect:general-mpa}. Indeed, for each node $i$ the MP algorithm actually computes the HIC on the computation tree rooted in $i$. The computation tree is closer to the actual graph when the node $i$ is farther from cycles or it belongs to fewer of them; then also the computed HIC will be closer to the true one. Moreover when the number of iterations grows the computation tree has an increasing number of nodes: even if they are far from the node considered, they contribute to overestimate its centrality.

Next, we present simulations on larger Erd\H{o}s-R\'enyi random graphs: we let  $n=500$ and consider (A) $p=\ln(n)/n\approx 0.012$ and (B) $p=0.1$.
It is known~\cite{AB02} that in the regime of case (A), the graph is guaranteed to be connected for large $n$, while in case (B) the graph, besides being connected, has many more cycles and its diameter is smaller. We plot the time evolution of the error for cases (A) and (B) in Figures~\ref{500A} and~\ref{500B}, respectively.
In both cases the node with the higher Harmonic Influence Centrality has been correctly identified by the MPA, and the mean ranking error is below 3. 

\begin{figure}
\centering
\includegraphics[scale=0.4]{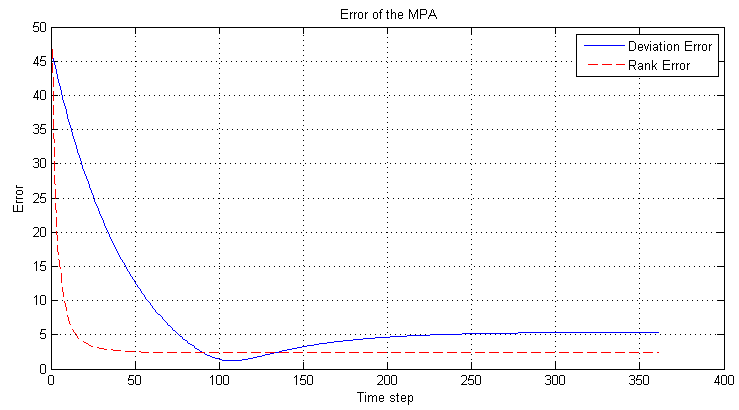}
\caption{Mean deviation error and mean ranking error of the MPA as functions of time steps on a large Erd\H{o}s-R\'enyi graph with low connectivity.}
\label{500A}
\end{figure}
\begin{figure}
\centering
\includegraphics[scale=0.4]{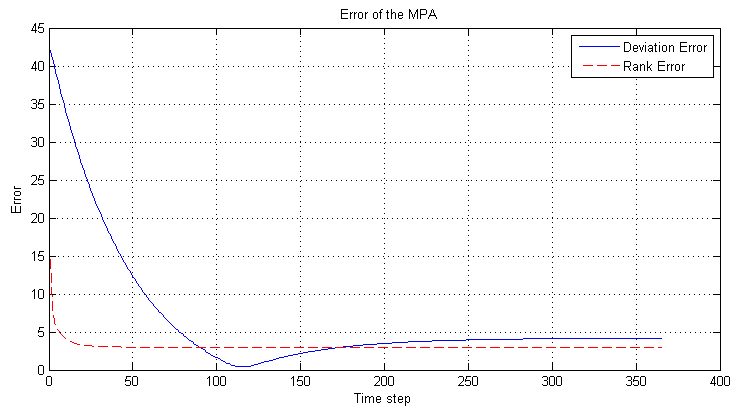}
\caption{Mean deviation error and mean ranking error of the MPA as functions of time steps on a large Erd\H{o}s-R\'enyi graph with high connectivity.}
\label{500B}
\end{figure}

In all these examples the deviation error first decreases and then increases as a function of time. This observation corresponds to the evolution of the computation trees: first they grow to represent relatively well the neighbourhood of the neighbors, but after a certain number of steps too many ``ghost'' nodes are included, thus worsening the computed approximation.


In order to stress the accuracy of our results we can compare the HIC computed through the MPA to other centralities, which can be computed in a distributed way and may be thought of as reasonable approximations of the HIC.
The first naive option is the degree centrality, that is, the number of neighbors. This is exactly what the MPA computes after one time step and the results in terms of deviation and rank error can be read on Figures~\ref{fig:erdosB},~\ref{500A} and~\ref{500B}. The experiments indicate that degrees are in general insufficient to describe the HIC and can at best be considered as rough approximations.
A second option is the eigenvector centrality: our experiments show it to be an unreliable approximation of the HIC, as it gives fairly large mean rank errors of $1.9$, $18.7$ and $9.1$, respectively, for the three experiments shown before. Similar observations on degree and eigenvector centralities can be drawn from Figure~\ref{fig:erdosA}, which includes the node-by-node values of degree and eigenvector centralities (the latter is re-scaled by the maximum of the true HIC).

The reason why these measures are inadequate to our problem is the following: both the degree centrality and the eigenvector centrality evaluate the influence of a node within a network, but they do not consider the different role of stubborn nodes, treating them as normal nodes.

\section{Conclusions}\label{sect:conclusions}

In this paper we have proposed a centrality measure on graphs related to the consensus dynamics in the presence of stubborn agents, the Harmonic Influence Centrality: this definition of centrality quantifies the influence of a node on the opinion of the global network. Although our setting assumes all stubborn except one to have the same value, the approach can be extended to more complex configurations, as discussed in~\cite{YA11}.
Thanks to an intuitive analogy with electrical networks, which holds true for time-reversible dynamics, we have obtained several properties of HIC on trees. As an application of these results, have proposed a message passing algorithm to compute the node which maximizes centrality. We have proved the algorithm to be exact on trees and to converge on any regular graph. Furthermore, numerical simulations show the good performance of the algorithm beyond the theoretical results. Further research should be devoted to extend the analysis of the algorithm beyond the scope of the current assumptions to include general networks with cycles, varied degree distributions, and directed edges.
\bibliographystyle{plain}
\bibliography{aliases,centrality-biblio}

\end{document}